\def\Z{{\mathbb Z}}
\def\A{{\mathbb A}}
\def\dist{{\rm dist}}
\newcommand{\cO}{{\mathcal O}}
\def\P{{\mathbb P}}
\def\prim{{\rm prim}}
\def\dim{{\rm dim}}
\def\tTr{{\rm Tr}}
\def\Span{{\rm Span}}
\def\R{{\mathbb R}}
\def\F{{\mathbb F}}
\def\Q{{\mathbb Q}}
\def\cB{{\mathcal B}}
\def\Z{{\mathbb Z}}
\def\P{{\mathbb P}}
\def\F{{\mathbb F}}
\def\Q{{\mathbb Q}}
\def\C{{\mathbb C}}
\def\W{{\mathcal W}}
\def\fz1{{F_{\Z,1}}}
\def\max{{\rm max}}
\newtheorem{theorem}{Theorem}%[section]
\newtheorem{corollary}[theorem]{Corollary}
\newtheorem{lemma}[theorem]{Lemma}
\newtheorem{remark}[theorem]{Remark}
\newtheorem{proposition}[theorem]{Proposition}
\newenvironment{proof}{\noindent {\bf Proof:}}{$\Box$ \vspace{2 ex}}
\title{An improvement on Schmidt's bound on  the number of \\ number fields of  bounded discriminant and small degree}
\author{Manjul Bhargava, Arul Shankar, and Xiaoheng Wang}
\begin{document}
\maketitle

\begin{abstract}
We prove an improvement on Schmidt's upper bound on the number of number fields of degree~$n$ and absolute discriminant less than $X$ for $6\leq n\leq 94$. 
We carry this out by improving and applying a  uniform bound on the number of monic integer  polynomials, having bounded height and  discriminant divisible by a large square, that we proved in a previous work~\cite{SF1}.
\end{abstract}

%\blfootnote{Mathematics Subject Classification. 11G05, 11R29, 11R45, 11E76}

\section{Introduction}

For $n\geq 2$, let $N_n(X)$ denote the number of isomorphism classes of number fields of degree $n$ having absolute discriminant less than $X$. In 1995, Schmidt~\cite{Sch}, proved the following upper bound on~$N_n(X)$:
\begin{theorem}[Schmidt]
We have
\begin{equation}\label{eqSch}
N_n(X)\ll_n X^{\frac{n+2}{4}}.
\end{equation}
\end{theorem}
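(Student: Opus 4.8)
The plan is to convert the problem of counting fields into the problem of counting integer polynomials, using the geometry of numbers to attach to each field a bounded generator, and then to read off the exponent $(n+2)/4$ from an elementary count of lattice points in a box. First I would fix a degree-$n$ field $K$ with $|\disc(K)|<X$ and embed $\cO_K$ as a lattice in $K\otimes_\Q\R\cong\R^{r_1}\times\C^{r_2}$, of covolume $\asymp_n\sqrt{|\disc(K)|}$. Let $H=\ker(\tTr_{K/\Q}\otimes\R)$ be the trace-zero hyperplane. The sublattice $\cO_K\cap H$ has rank $n-1$, and since the trace image on $\cO_K$ contains $n\Z=\tTr(\Z\cdot1)$ the index $[\cO_K:(\cO_K\cap H)\oplus\Z\cdot1]$ divides $n$; hence $\cO_K\cap H$ has covolume $\asymp_n\sqrt{|\disc(K)|}\ll_n\sqrt X$ in $H$. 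Applying Minkowski's convex-body theorem to the box $\{v\in H:\ |v|_\infty\le R\}$, whose $(n-1)$-dimensional volume is $\gg_n R^{n-1}$, I obtain a nonzero $\alpha\in\cO_K$ with $\tTr_{K/\Q}(\alpha)=0$ and every archimedean absolute value $|\alpha^{(j)}|\ll_n X^{1/(2(n-1))}=:C$. Since $H\cap\Q=0$, the element $\alpha$ is irrational, so it generates a subfield $F=\Q(\alpha)\subseteq K$ of degree $d\mid n$ with $2\le d\le n$.

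The main case is $d=n$, i.e.\ $\alpha$ generates $K$, so $K\cong\Q[x]/(f_\alpha)$ where $f_\alpha=x^n+a_{n-1}x^{n-1}+\dots+a_0\in\Z[x]$ is the characteristic polynomial of $\alpha$. By construction $a_{n-1}=\tTr_{K/\Q}(\alpha)=0$, while $|a_{n-k}|=|e_k(\alpha^{(1)},\dots,\alpha^{(n)})|\ll_n C^{\,k}$ for $2\le k\le n$. The number of integer tuples $(a_0,\dots,a_{n-2})$ in this box is therefore
\begin{equation*}
\ll_n\ \prod_{k=2}^{n}C^{\,k}\ =\ C^{\,\frac{n(n+1)}{2}-1}\ =\ C^{\,\frac{(n-1)(n+2)}{2}}\ =\ X^{\frac{n+2}{4}} .
\end{equation*}
Since $K$ is recoverable from $f_\alpha$, the assignment $K\mapsto f_\alpha$ is injective on this family of fields, so it has size $\ll_n X^{(n+2)/4}$. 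I would stress that the normalization $\tTr(\alpha)=0$, which forces $a_{n-1}=0$, is exactly what removes one factor of $C$ and converts the naive exponent $\frac{n(n+1)}{4(n-1)}$ into $\frac{n+2}{4}$; it is also why the method is stuck at $(n+2)/4$, since one cannot simultaneously normalize two symmetric functions of the conjugates.

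If instead $d<n$, then $\alpha$ only recovers a proper subfield $F$, of degree $d\le n/2$, and such $K$ must be counted separately. Here $|\disc(F)|^{n/d}\mid\disc(K)$, so $|\disc(F)|\le X^{d/n}$; by induction on $n$ there are $\ll X^{\frac dn\cdot\frac{d+2}{4}}$ choices for $F$. For each such $F$ one bounds the number of degree-$(n/d)$ extensions $K/F$ with $\big|N_{F/\Q}\mathfrak d_{K/F}\big|\le X/|\disc(F)|^{n/d}$ by running the argument of the first paragraph relative to $F$ (Minkowski for $\cO_K$ inside $K\otimes_\Q\R$ modulo the $F\otimes_\Q\R$-directions, and then counting characteristic polynomials over $\cO_F$ in place of $\Z$). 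Summing over $F$ — using the lower-degree case of the theorem to control $\sum_F|\disc(F)|^{-s}$ — one checks that every such subfield configuration contributes an exponent strictly smaller than $(n+2)/4$, so the $d=n$ term dominates and the theorem follows. (Note the subfield case is genuinely needed: there exist fields, e.g.\ certain multiquadratic ones, whose smallest generator has height $\gg X^{1/(2(n-1))+\epsilon}$.)

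The conceptual content sits entirely in the first two paragraphs and is short; the real work is the bookkeeping of the third. The main obstacle is carrying out the geometry of numbers over a general base field $F$, tracking the relative different and discriminant correctly, and verifying — with the help of the inductive hypothesis — that each of the finitely many subfield types is truly lower order; one must also be careful that every implied constant depends on $n$ alone (the Minkowski constants, the comparison of $|\cdot|$ on $\C$ with the Euclidean norm on $\R^2$, the index bound $[\cO_K:(\cO_K\cap H)\oplus\Z\cdot1]\mid n$, and the effective primitive-element bounds used in the relative step). None of this is deep, but it is the part that requires care.
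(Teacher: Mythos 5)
Your argument is essentially the paper's (i.e.\ Schmidt's) own: take a shortest nonzero vector $\alpha$ of the trace-zero lattice $\cO_K^{\tTr=0}$, of size $O(X^{1/(2n-2)})$, so that its characteristic polynomial has vanishing subleading coefficient and $|a_{n-k}|\ll_n X^{k/(2n-2)}$, giving $\ll X^{(n+2)/4}$ possible polynomials, with the fields for which $\alpha$ fails to generate (necessarily imprimitive) handled separately at strictly lower order. The only difference is in that secondary case: the paper simply invokes Schmidt's relative bound (his Equation (1.2)) to get $O(X^{(n+4)/8})$ for imprimitive fields, whereas you sketch the relative Minkowski-plus-induction argument over the subfield $F=\Q(\alpha)$ yourself; that is exactly Schmidt's relative argument, so the two routes coincide in substance.
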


A folklore conjecture predicts that $N_n(X)\asymp X$. This conjecture is elementary for $n=2$; for $n=3$, it was proven by Davenport and Heilbronn~\cite{DH}, and for $n=4,$ $5$  by the first-named author \cite{dodqf,dodpf}; these works in fact  determine asymptotics for $N_n(X)$ as $X\to\infty$. 

For large $n$, the exponent of $X$ in \eqref{eqSch} was  substantially improved by  Ellenberg--Venkatesh \cite{EV}, Couveignes \cite{Cou}, and Lemke Oliver--Thorne \cite{OT}, to $O(\exp(c\sqrt{\log n}))$, $O(\log^3 n)$, and $O(\log^2 n)$, respectively.
In~particular, in view of the implied $O$-constants, the latter work of Lemke Oliver and Thorne improved Schmidt's bound for all $n\geq 95$, while Schmidt's bound has remained the best known for $6\leq n\leq 94$. %However, due to the implied constants in the above exponents, these works only improve the6\ Schmidt upper bound for $n\geq 95$.  

The aim of this paper is to improve the Schmidt bound for %\linebreak 
$6\leq n\leq 94$.
%for all $n$, which in particular will provide improved upper bounds on $N_n(X)$ for $6\leq n\leq 94$.
More precisely, we prove the following result.
\begin{theorem}\label{thimp}
%Write odd $n\geq 3$ as $n=2g+1$ and even $n\geq 4$ as $n=2g+2$. 
For $n\geq6$, we have
\begin{equation*}
N_n(X)\ll_\epsilon X^{\frac{n+2}{4}-\frac{1}{2n-2}+\frac{1}{2^{2g}(2n-2)}+\epsilon},
\end{equation*}
where $g=\left\lfloor \frac{n-1}2\right\rfloor$.
\end{theorem}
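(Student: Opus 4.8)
The starting point is the classical parametrization: every degree-$n$ number field $K$ with ring of integers $\O_K$ has $\O_K = \Z[\theta]$ for some $\theta$ whose minimal polynomial $f$ is monic of degree $n$, and — after translating $\theta$ by an integer to center it and using a standard GL$_2(\Z)$-type reduction — one may assume the coefficients of $f$ are bounded in terms of $\disc(K)$; explicitly, writing $f = x^n + a_2 x^{n-2} + \cdots + a_n$, one has $|a_i| \ll \Disc(f)^{i/(2n-2)}$ after reduction, and $\Disc(f) = \Disc(\O_K[\theta]) = [\O_K : \Z[\theta]]^2 \cdot \disc(K)$. Counting number fields with $|\disc(K)| < X$ thus reduces to counting monic integer polynomials of bounded height whose discriminant is of the form (square)$\times$(something $< X$). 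The naive count of all such polynomials of height $\ll X^{1/(2n-2)}$ is $\asymp X^{(n+2)/(4)}$ — this is exactly where Schmidt's exponent comes from. The whole game is to save on the polynomials whose discriminant is divisible by a large square, since those are the ones corresponding to $\Z[\theta]$ of large index in $\O_K$, and there are relatively few fields but potentially many such polynomials.

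Concretely, I would break the count according to the size of $q = [\O_K:\Z[\theta]]$. For a fixed squarefree-ish modulus $m$ (or prime power), one needs a uniform upper bound on the number of monic integer polynomials $f$ of height at most $H$ with $m^2 \mid \Disc(f)$; this is precisely the type of estimate proved in the authors' earlier work \cite{SF1}, and the claim is that an improved such bound — improved by a factor governed by the $2$-torsion, i.e.\ by $2^{2g}$ where $g = \lfloor (n-1)/2 \rfloor$ is essentially the genus of the relevant hyperelliptic-type curve — feeds through. Summing the improved uniform bound over $q$ and over the residual discriminant $< X/q^2$, the dominant contribution comes from small $q$, and the arithmetic of how the $m^2 \mid \Disc(f)$ condition cuts down the polynomial count (a congruence condition that is not quite codimension the naive amount, because the discriminant locus is singular and the relevant count of points on it mod $m$ is smaller than $m^{n-1}$ by the factor coming from $2$-torsion of the Jacobian / the structure of the singular fibers) yields the exponent $\frac{n+2}{4} - \frac{1}{2n-2} + \frac{1}{2^{2g}(2n-2)}$.

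The key steps, in order, are: (i) set up the parametrization and the reduction theory so that counting fields becomes counting reduced monic polynomials with a square-divisibility condition on the discriminant, tracking the index $q$; (ii) invoke (and first improve) the uniform bound from \cite{SF1} on $\#\{f : \|f\| \le H,\ m^2 \mid \Disc(f)\}$ — the improvement is the heart of the matter and is where the $2^{2g}$ enters, coming from a more careful analysis mod $p$ of the scheme $\{m^2 \mid \Disc\}$, distinguishing the "easy" square divisibility (from a single ramified prime, roughly codimension $1$) from the "hard" one, and bounding the latter via a count of $\F_p$-points on the discriminant locus that exploits the hyperelliptic structure and its $2$-torsion; (iii) perform the sum over $q$ (and over the complementary discriminant size), verifying that the worst term is $q$ of size a small power of $X$ and that it produces exactly the stated exponent; (iv) handle the non-monogenic fields (those with no generator of small height, or where one must pass to a sublattice) by a standard argument — e.g.\ Schmidt's original trick of choosing $\theta$ to minimize a suitable covolume — so that these contribute a lower-order term.

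The main obstacle is step (ii): proving the improved uniform bound on monic polynomials of bounded height with discriminant divisible by $m^2$, with the savings quantified by $2^{2g}$. This requires understanding, for each prime $p \mid m$, the $p$-adic density of monic polynomials $f$ with $v_p(\Disc f) \ge 2$, separated by the "type" of the factorization of $f \bmod p$ and the way the square divisibility is realized; the gain over the trivial bound comes from the observation that when the square divisibility forces $f \bmod p$ to have a repeated factor of degree $\ge 2$ (or two distinct repeated factors), the locus is of higher codimension, and globally one is counting integral points near a singular subvariety of the space of polynomials whose structure is controlled by the $2$-torsion in the Jacobian of the associated curve $y^2 = f(x)$ (or its even-degree analogue), of dimension $g$. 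Making the quantity $2^{2g}$ — i.e.\ $(\#J[2])^{\pm 1}$ — appear cleanly, and uniformly in $m$, and then checking it survives the summation over $q$ without loss, is the delicate part; everything else is bookkeeping on top of \cite{SF1} and Schmidt's framework.
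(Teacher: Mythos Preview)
Your high-level framework is right and matches the paper: follow Schmidt's parametrization (a primitive $K$ of absolute discriminant $<X$ yields a monic $f$ of height $\ll X^{1/(2n-2)}$ with vanishing subleading term), observe that when $|\Delta(f)|$ is near its typical size $\asymp H^{n(n-1)}$ the condition $|\disc(K)|<X$ forces $m^2\mid\Delta(f)$ for some large $m$, and then feed in an improved tail bound on such $f$. (Two small slips: not every $\O_K$ equals $\Z[\theta]$, so one uses Schmidt's shortest-vector element in $\O_K^{\tTr=0}$ rather than a ring generator; and imprimitive fields are disposed of by the elementary subfield bound $O(X^{(n+4)/8})$, not by a sublattice argument.)

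The genuine gap is in your step~(ii), the mechanism producing the $2^{2g}$. You propose a $p$-adic density analysis of the locus $\{m^2\mid\Delta\}$: separating types of repeated factors mod $p$, counting $\F_p$-points on singular strata of the discriminant hypersurface, and extracting a correction governed by $\#J[2]$. That is not how the $2^{2g}$ arises here, and there is no evident reason such a local computation would yield exactly the exponent $1-1/2^{2g}$ in the \emph{error} term (as opposed to the $1/M$-type main-term savings, which is where density arguments naturally contribute). In the paper, $\W_m$ is split into $\W_m^{(1)}$ (where $m\mid\Delta'(f)$; handled by a geometric-sieve argument giving the $M^{2/(n+3)}$ denominator) and $\W_m^{(2)}$ (where $m\mid f'(r)$ and $m^2\mid f(r)$ for some integer $r$). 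Elements of $\W_m^{(2)}$ lift, as in \cite{SF1}, to \emph{distinguished} elements $B$ in the space $W$ of symmetric $n\times n$ matrices relative to the split form $A_0$, and one counts distinguished $G(\Z)$-orbits by geometry of numbers. The $2^{2g}$ enters through an explicit construction: there is a polynomial $F(B,x)$ of degree $2^{2g}$ in $x$ --- the degree being the number of common isotropic $g$-planes for the pencil $(A_0,B)$ --- which is generically irreducible but acquires a \emph{rational} root whenever $B$ is distinguished. The quantitative Hilbert irreducibility theorem of Castillo--Dietmann~\cite{CD} then bounds the number of such $B$ in a box of sidelengths $Y_{ij}$ by $O_\epsilon\bigl(\prod Y_{ij}^{1+\epsilon}/\min Y_{ij}^{1-1/2^{2g}}\bigr)$, which replaces the Selberg-sieve error $H^{\dim W-1/5}$ of \cite{SF1} by $H^{\dim W-1+1/2^{2g}+\epsilon}$. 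So the $2^{2g}$ is the degree of an auxiliary Hilbert-irreducibility polynomial (equivalently, the number of maximal isotropic subspaces), and the saving is global and archimedean; your $p$-adic route does not supply this step. Your intuition that $2^{2g}=\#J[2]$ is ultimately relevant is correct, but it manifests through the pencil-of-quadrics lift and HIT, not through local densities on the discriminant locus.
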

For example, the best known bound for the number of isomorphism classes of sextic fields of absolute discriminant less than $X$ was previously $O(X^2)$, while  Theorem~\ref{thimp} yields the bound $O(X^{61/32+\epsilon})$.

\begin{remark}{\em
{\bf (a)} In an independent work, Anderson, Gafni, Hughes, Lemke Oliver,
Lowry-Duda, Thorne, Wang, and Zhang \cite{AGHLLTWZ} also obtain an improvement to Schmidt's bound, proving that $N_n(X)\ll_\epsilon X^{(n+2)/4-1/(4n-4)+\epsilon}$.

{\bf (b)} Improvements to Schmidt's bound in small degrees have played an important role in recent works, such as in proving van der Waerden's Conjecture on Galois groups in small degrees~\cite{vdw}; indeed, the improvements contained in Theorem~\ref{thimp} also immediately lead to corresponding improvements on the bounds in \cite{vdw}. 

{\bf (c)} Schmidt's original work obtains upper bounds for the number of degree $n$ extensions (with bounded absolute discriminant) of any fixed number field $K$. In forthcoming work \cite{Global2}, we generalize the results of \cite{SF1}, obtaining upper bounds for the number of monic polynomials with coefficients in the ring of integers of $K$, with bounded height and whose discriminants are divisible by the square of a large prime ideal. In combination with the methods of this paper, this will yield improved upper bounds on Schmidt's result for $K$.
}\end{remark}

\paragraph*{Methods.}
Our proof follows the strategy of \cite{Sch}. For a monic real polynomial \begin{equation}
\label{feq}    
f(x)=x^n+a_1x^{n-1}+a_2x^{n-2}+\cdots+a_n,
\end{equation} 
we define the {\it height} $H(f)$ of $f$ by 
\begin{equation*}
%H(x^n+a_1x^{n-1}+a_2x^{n-2}+\cdots+a_n)
H(f):=\max(|a_i|^{1/i}).
\end{equation*}

Let $K$ be a number field of degree $n$ having absolute discriminant less than $X$. Then the lattice $\cO_K^{\tTr=0}$ (viewed as a subset of $\R^{n-1}$ via its archimedean embeddings) has covolume at most $O(\sqrt{X})$ in $\R^{n-1}$; hence %its first successive minimum
the length of its shortest nonzero vector $\alpha$ is at most $O((\sqrt{X})^{1/(n-1)})=O(X^{1/(2n-2)})$. If $K$ is {\it primitive}, i.e., $K$ has no nontrivial  subfield, then $K=\Q(\alpha)$ %determines $K$ 
since $\alpha\not\in\Q$. %and hence 
%$\Q(\alpha)=K$.
The minimal polynomial $f(x)$ of $\alpha$ is an integer monic  polynomial with height at most  $O(X^{1/(2n-2)})$ and vanishing subleading coefficient. That is, if $f(x)$ is expressed in the form (\ref{feq}), 
%$$f(x) = x^n + a_2x^{n-2} + \cdots + a_{n-1}x + a_n$$
then $a_1=0$ and %where for all $i=2,\ldots,n$, $a_i\in\Z$ and 
$|a_i|\ll X^{i/(2n-2)}$ for $i=2,\ldots,n$. Therefore, the total number of primitive number fields of degree~$n$ with absolute  discriminant less than $X$ is bounded by the number of such monic degree $n$  integer polynomials with height at most  $O(X^{1/(2n-2)})$ and vanishing subleading coefficient. The number of such polynomials is  $$\ll X^{\frac{2+3+\cdots + n}{2n-2}} = X^{\frac{n+2}{4}},$$
yielding Schmidt's bound for primitive number fields of degree $n$. Meanwhile, imprimitive fields of degree $n$ of absolute discriminant at most $X$ can be counted using their proper primitive subextensions (which would thus have degree at most $n/2$) to obtain the bound $O(X^{(n/2+2)/4})=O(X^{(n+4)/8})$, which is much smaller than Schmidt's bound for primitive fields.

Our improvement on the above argument in the case of primitive fields is based on the following observation: a monic integral polynomial $f(x)$ of degree $n$ with height $\asymp X^{1/(2n-2)}$ usually has discriminant close to $X^{n/2}$; in this scenario, if the number field $K=\Q[x]/(f(x))$ has absolute discriminant less than $X$, then the discriminant of $f(x)$ must be divisible by a large square! This is because the discriminant of $f$ is equal to the discriminant of $K$ times the square of the index of $\Z[x]/f(x)$ in the ring of integers of $K$. Thus,  to bound the number of  primitive number fields $K$ of degree $n$ having absolute discriminant less than $X$, it~suffices to bound the number of monic polynomials of height $\ll X^{1/(2n-2)}$ with vanishing subleading coefficient whose discriminant is divisible by a large square. 

In previous work \cite{SF1}, in order to sieve to squarefree discriminants, we proved an upper bound on the number of monic integral polynomials of degree $n$ and bounded height having discriminant  divisible by the square of a large squarefree number; this was accomplished via a lift to pairs of integral $n$-ary quadratic forms $(A_0,B)$ where $A_0$ is split.
Specifically, for a positive integer $m$, %let $W$ denote the space of monic integer polynomials of degree $n$, and for a positive integer $m$, 
let $\W_m$ denote the set of integer monic polynomials of degree $n$ having discriminant divisible by $m^2$. Then, in  \cite[Theorem 4.4]{SF1}, we proved the following theorem:

\begin{theorem}\label{thSF1}
For a real number $H>1$, we have
\begin{equation*}
\#\bigcup_{\substack{m>M\\m\textrm{\em \:squarefree}}}\bigl\{f(x)\in \W_m:H(f)<H\bigr\}
= O_\epsilon\Bigl(\frac{H^{n(n+1)/2+\epsilon}}{\sqrt{M}}\Bigr)+O_\epsilon\bigl(H^{n(n+1)/2-1/5+\epsilon}\bigr).
\end{equation*}
\end{theorem}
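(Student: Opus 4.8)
The plan is to deduce the bound by lifting the counting problem for monic polynomials to one about pairs of integral symmetric matrices with a \emph{split} first member, and then running a geometry-of-numbers count with congruence conditions, in the style of Bhargava's treatment of squarefree values. Fix $n$ and let $A_0$ be the split integral $n$-ary quadratic form, represented by the $n\times n$ matrix with $1$'s along the antidiagonal. For an integral symmetric matrix $B$, put $f_B(x):=\pm\det(xA_0-B)$, the sign chosen so that $f_B$ is monic of degree $n$; the map $B\mapsto f_B$ is invariant under a twisted action of $\Gamma:=\SO(A_0)(\Z)$. The arithmetic parametrization of monic polynomials of degree $n$ guarantees that every monic $f$ with $\disc(f)\neq 0$ lifts to at least one $\Gamma$-orbit of such $B$, after — if necessary — allowing $A_0$ to range over the finitely many split-type genus representatives. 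For an upper bound this is all we need: each $f$ contributes at least one point $B$ to a fundamental domain $\mathcal F$ for $\Gamma$ (we never need orbit sizes bounded from below, which would fail for reducible $f$), so
$$\#\bigcup_{\substack{m>M\\ m\text{ sqfree}}}\{f\in\W_m:H(f)<H\}\ \le\ \sum_{A_0}\ \#\bigcup_{\substack{m>M\\ m\text{ sqfree}}}\bigl\{B\in\mathcal F:H(f_B)<H,\ m^2\mid\disc(f_B)\bigr\}.$$

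The next step is to show that, for a prime $p\mid m$, the condition $p^2\mid\disc(f_B)$ is — outside a small degenerate locus — a codimension-$2$ congruence condition on $B\bmod p$. This is the payoff of passing to the matrix space: whereas $p^2\mid\disc(f)$ for a bare polynomial depends on $f\bmod p^2$, in the lifted picture ``$f_B\bmod p$ has a repeated factor'' is one mod-$p$ condition on $B$ (namely $\disc(f_B)\equiv 0$), and ``that repeated factor is non-maximal at $p$'' (which is what forces the extra power of $p$ into $\disc$) is a \emph{second} mod-$p$ condition on $B$, detectable from the rank and kernel structure of the pencil $(A_0,B)\bmod p$; the split form $A_0$ makes this structure explicit. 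The degenerate locus — $f_B\bmod p$ having a triple root or two distinct double roots — has even larger codimension and is handled separately. Via the Chinese Remainder Theorem this exhibits $\{B:m^2\mid\disc(f_B)\}$, for squarefree $m$, as a union of residue classes modulo $m$ of total density $\ll_\epsilon m^{-2+\epsilon}$.

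Finally I would bound $\#\{B\in\mathcal F:H(f_B)<H,\ B\text{ in those classes}\}$ by averaging over fundamental domains: cover $\mathcal F$ by boxes $g\cdot\mathcal B$ with $g$ in a Siegel set for $\SO(A_0)(\R)$, each of covolume $\asymp H^{n(n+1)/2}$, and count lattice points in each box subject to the congruences. Over the bounded part of the Siegel set one obtains the expected main term $O_\epsilon(H^{n(n+1)/2+\epsilon}/m^2)$, which alone would sum over squarefree $m>M$ to $O_\epsilon(H^{n(n+1)/2+\epsilon}/M)$. The difficulty — and the main obstacle — is everything else: in the cusps the boxes are highly skewed, so a mod-$m$ congruence of codimension $2$ can only be exploited along directions of length $\gg m$ and the per-$m$ saving degrades; and for $m$ beyond a fixed power of $H$ the congruence modulo $m^2$ is essentially vacuous, so one must instead invoke a separate estimate bounding the number of $f$ for which $\Z[x]/f_B$ has large index in its maximal order (equivalently, $\disc(f_B)$ has a large squarefree-square part). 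Identifying which reducible or otherwise distinguished sublattices of the symmetric-matrix lattice can support many $B$ with $m^2\mid\disc(f_B)$, and bounding their contribution uniformly in $m$ and throughout the cusps, is the technical heart of the argument; optimizing the resulting estimates across the Siegel set and across the range of $m$ is what produces — and limits one to — the two terms $O_\epsilon(H^{n(n+1)/2+\epsilon}/\sqrt M)$ and $O_\epsilon(H^{n(n+1)/2-1/5+\epsilon})$.
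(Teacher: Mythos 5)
Your outline shares with \cite{SF1} the idea of lifting to pairs $(A_0,B)$ with $A_0$ split and counting in a fundamental domain, but the step that carries all the weight is not correct as stated, and the two exponents in the theorem are never actually produced. The central claim --- that for squarefree $m$ the set $\{B: m^2\mid \disc(f_B)\}$ is, outside a small degenerate locus, a union of residue classes modulo $m$ of density $\ll m^{-2+\epsilon}$ --- is false. On the generic part of the locus $p\mid\disc(f_B)$, namely where $f_B$ bmod $p$ has a single double root, the resolvent map $B\mapsto f_B$ is smooth, so each residue class of $B$ modulo $p$ surjects onto a full residue class of $f$ modulo $p$ and therefore contains both matrices with $p^2\mid\disc(f_B)$ and matrices with $p^2\nmid\disc(f_B)$: divisibility by $p^2$ remains a genuine mod-$p^2$ condition on the given lift $B$ (one sees this already explicitly in low degree). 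What is true, and is the actual mechanism of \cite{SF1}, is a dichotomy: either $p^2\mid\disc(f)$ for ``mod $p$ reasons'' ($p\mid\Delta'(f)$, the set $\W_p^{(1)}$), in which case no matrices are needed and the quantitative Ekedahl/geometric sieve of \cite{geosieve} applied directly to the coefficient space gives a saving of $1/m_1$; or $p^2\mid\disc(f)$ for ``mod $p^2$ reasons'' (there is $r$ with $p\mid f'(r)$, $p^2\mid f(r)$, the set $\W_p^{(2)}$), in which case one constructs a \emph{different} integral lift of $f$ which is distinguished and has $Q$-invariant divisible by $p$. The object counted is then the set of $G(\Z)$-orbits of \emph{distinguished} elements with $Q$-invariant $>M_2$, and the saving $1/m_2$ comes from the archimedean largeness of the $Q$-invariant inside the fundamental domain --- not from equidistribution in congruence classes. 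This is decisive in exactly the range that matters for the application, where $M$ is a large power of $H$: there your congruence-density count is vacuous (as you concede), and the fallback you invoke --- a bound on polynomials whose ring $\Z[x]/(f)$ has large index --- is precisely the statement being proved, so the argument is circular at that point.

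Consequently neither term of the theorem is within reach of the proposal. The $1/\sqrt{M}$ arises because the dichotomy above only guarantees $m_1m_2\geq m$, hence $\max(m_1,m_2)>\sqrt{M}$ (this is the first containment of Proposition~\ref{cor:sw}, proved in \cite{SF1}), and each branch saves one power of its own modulus; your heuristic would instead predict a saving of $1/M$, which your method cannot deliver. The term $H^{n(n+1)/2-1/5+\epsilon}$ has a specific source: in the main body of the fundamental domain the geometry of numbers gives no saving over $H^{n(n+1)/2}$, and one must detect that the relevant integral points are distinguished (non-generic); in \cite{SF1} this is done with a Selberg sieve, yielding the exponent $-1/5$ (and it is exactly this step that the present paper replaces by the quantitative Hilbert irreducibility argument of Corollary~\ref{corhilbert} to get $-1+1/2^{2g}$, as in \eqref{eqeq2}). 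Your count never exploits distinguishedness at all --- the congruence conditions you impose are insensitive to it --- so nothing in the proposal generates a power saving in the main body, and asserting that ``optimizing across the Siegel set'' limits one to $1/\sqrt{M}$ and $-1/5$ does not substitute for these two missing ingredients: the $\W_m^{(1)}\cup\W_m^{(2)}$ decomposition with its separate geometric-sieve treatment of $\W^{(1)}$, and the distinguished-orbit count with large $Q$-invariant plus a sieve (or HIT) input for $\W^{(2)}$.
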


In this paper, we improve upon Theorem~\ref{thSF1} in two ways. First, we generalize these  results to bound the number of monic integer polynomials of degree $n$ having discriminant divisible by the square of a large  (not necessarily squarefree)  integer. Second, we replace the use of the Selberg sieve in \cite{SF1} by a Hilbert irreducibility theorem (HIT) argument---using the quantitative version of HIT due to Castillo and Dietmann~\cite{CD}---to attain an improved error term.  We thereby prove the following theorem:

\begin{theorem}\label{thMainPolybound}
For a real number $H>1$, we have
\begin{equation*}
\begin{array}{rcl}
\displaystyle
\#\bigcup_{\substack{m>M}}\#\bigl\{f(x)\in \W_m:H(f)<H\bigr\}
&=& \displaystyle O_\epsilon\Bigl(\frac{H^{n(n+1)/2+\epsilon}}{M^{2/(n+3)-\epsilon}}\Bigr)+O\bigl(H^{n(n+1)/2-1+1/2^{2g}+\epsilon}\bigr);
\\[.25in]
\displaystyle
\#\bigcup_{\substack{m>M\\m\textrm{\em \:squarefree}}}\#\bigl\{f(x)\in \W_m:H(f)<H\bigr\}
&=& \displaystyle O_\epsilon\Bigl(\frac{H^{n(n+1)/2+\epsilon}}{\sqrt{M}}\Bigr)+O\bigl(H^{n(n+1)/2-1+1/2^{2g}+\epsilon}\bigr).
\end{array}
\end{equation*}
\end{theorem}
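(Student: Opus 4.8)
The plan is to prove Theorem~\ref{thMainPolybound} by upgrading the two main ingredients of the argument in \cite{SF1}: the geometry-of-numbers count of pairs of quadratic forms, and the sieve that passes from ``discriminant divisible by $m^2$'' to the final bound. For the first, recall the setup of \cite{SF1}: a monic polynomial $f$ with $\disc(f)$ divisible by $m^2$ lifts (after accounting for local conditions) to a $\GL_n(\Z)$-orbit of pairs $(A_0,B)$ of integral symmetric $n\times n$ matrices, where $A_0$ is the fixed split form and the resolvent invariant of $(A_0,B)$ recovers $f$; the divisibility $m^2\mid \disc(f)$ forces $(A_0,B)$ to lie in a sublattice of index growing with $m$. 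Counting lattice points in the relevant region via Davenport's lemma, and crucially bounding the contribution of points lying in the ``cuspidal'' region and on reducible loci, yields a main term of size $H^{n(n+1)/2}/(\text{index})$. **The first thing I would do** is redo this count keeping track of the index as a power of $m$ rather than $\sqrt m$ in the non-squarefree case: when $m$ is a perfect square $m=\ell^2$, divisibility by $m^2=\ell^4$ should cut out a sublattice of index roughly $m^{2/(n+3)}$ after optimizing over how the $\ell$-adic valuation is distributed among the entries of $B$, which is where the exponent $2/(n+3)$ comes from.

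**Next I would** replace the Selberg/Ekedahl-type sieve used in \cite{SF1} to control the error term by the Castillo--Dietmann~\cite{CD} effective form of Hilbert irreducibility. The error term in Theorem~\ref{thSF1} came from two sources: the tail $M<m\le H^{\delta}$ handled by the main term, and a genuinely separate term $H^{n(n+1)/2-1/5}$ coming from the sieve's inability to detect squares beyond a certain range together with the contribution of $f$ whose associated pair $(A_0,B)$ is ``distinguished'' or lies on a proper subvariety. The new idea is that polynomials $f$ with $\disc(f)$ divisible by a large square but for which the naive lattice-point count overcounts are, after passing to the hyperelliptic curve $y^2=f(x)$ (or its Jacobian, of dimension $g=\lfloor(n-1)/2\rfloor$), governed by rational points on a thin set; Castillo--Dietmann gives a power-saving bound $H^{n(n+1)/2-1+1/2^{2g}+\epsilon}$ for the count of such $f$, the exponent $1/2^{2g}$ reflecting the $2$-torsion of the $g$-dimensional Jacobian (there are $2^{2g}$ torsion points, and one saves a full unit divided by this in the Hilbert irreducibility input). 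I would quote \cite{CD} for a polynomial-parametrized family and verify the hypotheses (the family is non-split, the generic fiber has the expected Galois group $S_n$), then bound the exceptional set.

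**The squarefree case then follows immediately**: for squarefree $m$, divisibility of $\disc(f)$ by $m^2$ already cuts out a sublattice of index $\asymp m^{(n+3)/2}$ in the relevant region — no optimization over valuation distributions is needed — so the main term is $H^{n(n+1)/2+\epsilon}/\sqrt M$ as in Theorem~\ref{thSF1}, while the error term is the improved $H^{n(n+1)/2-1+1/2^{2g}+\epsilon}$ from the Castillo--Dietmann step, which is shared between the two parts of the statement. **I expect the main obstacle to be** the bookkeeping in the geometry-of-numbers count for non-squarefree $m$: one must carefully sum over all factorizations $m^2=d_1d_2\cdots$ recording how the square divisibility is realized $\ell$-adically in the lift $(A_0,B)$, bound the number of sublattices of each index, control the cuspidal regions uniformly in $m$ (so that the cutoff for ``large $m$'' does not interact badly with the cusp), and check that the optimal exponent is indeed $2/(n+3)$ and not something worse. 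A secondary difficulty is confirming that the Castillo--Dietmann bound applies with the stated exponent $1/2^{2g}$ to the specific thin sets that arise — in particular, identifying precisely which $f$ are \emph{not} detected by the lattice-point main term (e.g., those where $\Z[x]/f(x)$ is maximal at all primes dividing $m$, or where the index-$m$ structure is ``accidental'') and showing these are cut out by the vanishing of a suitable resolvent, landing them in a thin set of the appropriate shape.
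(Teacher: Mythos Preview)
Your plan has a genuine structural gap: you assume that every $f\in\W_m$ lifts to a pair $(A_0,B)$ with $B$ lying in a sublattice of controlled index, and that the exponent $2/(n+3)$ arises by optimizing how the $p$-adic valuation of $m$ is distributed among the entries of $B$. Neither is how the argument goes. In \cite{SF1} the lift to $(A_0,B)$ only works for those $f$ admitting an $r\in\Z$ with $m\mid f'(r)$ and $m^2\mid f(r)$ (the set $\W_m^{(2)}$ in the paper); for such $f$ the $(A_0,B)$ count already gives a saving of a full factor of $M$, not $M^{2/(n+3)}$. The remaining $f\in\W_m$ are handled by an entirely separate mechanism: one constructs an auxiliary invariant $\Delta'(f)=\sum_{i<j}\Delta(f)/(r_i-r_j)^2$, shows (Lemma~\ref{thm:sw}) that if $f\notin\W_{m'}^{(2)}$ for $m'$ around $\sqrt m$ then $m\mid\Delta'(f)$, and bounds this set $\W_m^{(1)}$ by a geometric-sieve/fibering argument in the style of \cite{geosieve}, yielding a saving of $M^{2/(n-1)}$. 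The exponent $2/(n+3)$ then comes from Proposition~\ref{cor:sw}: one writes $\bigcup_{m>M}\W_m\subset\bigcup_{m>q_1}\W_m^{(1)}\cup\bigcup_{m>q_2}\W_m^{(2)}$ with $q_1q_2^2\asymp M$, and balances $q_1^{2/(n-1)}$ against $q_2$ to get $q_2\asymp M^{2/(n+3)}$. Your proposal contains no analogue of $\Delta'$ or of this two-piece decomposition, and the ``sum over factorizations $m^2=d_1d_2\cdots$'' you anticipate is not the right bookkeeping.

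Your description of the Castillo--Dietmann input is also off target. The quantitative HIT is not applied to the family of polynomials $f$ (or to the thin set of $f$ where the lattice count fails), but on the space $W$ of symmetric $n\times n$ matrices, to bound the number of \emph{distinguished} $B$ in an expanding box. The exponent $1/2^{2g}$ is the reciprocal of the degree of a specific polynomial $F(B,x)\in\Z[\sqrt{-1}][b_{ij}][x]$, constructed explicitly in Theorems~\ref{thm:poly}--\ref{thm:even} via the $2^{2g}$ common isotropic $g$-planes of $(A_0,B)$ and the Pl\"ucker embedding, which is irreducible generically but has a rational root whenever $B$ is distinguished; applying \cite{CD} to $F$ (with a small extension to allow unequal side lengths) gives Corollary~\ref{corhilbert}, which then replaces the Selberg sieve inside the geometry-of-numbers count from \cite{SF1}. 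So the $2^{2g}$ is the number of maximal common isotropic subspaces of a generic pencil of quadrics, not literally ``$2$-torsion of the Jacobian,'' and verifying irreducibility of $F$ over $\Q(\sqrt{-1})(b_{ij})$ is a nontrivial step (done by exhibiting a specific $B_0$ for which the Galois action on the $g$-planes is transitive) that your outline does not anticipate.
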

This improvement to Theorem~\ref{thSF1} then allows us to deduce Theorem~\ref{thimp}. 

We note that Theorem~\ref{thMainPolybound} also implies corresponding improvements to the error terms in the main results of  \cite{SF1}. Let $V_n\simeq\A^n$ denote the space of monic polynomials of degree $n$. An element $f\in V_n(\Z)$ is said to be {\it maximal} if it has nonzero discriminant and if $\Z[x]/f(x)$ is the ring of integers of $\Q[x]/f(x)$. Then, for certain  constants $\lambda_n>0$ as defined in \cite[Equation (1)]{SF1}, we prove:

\begin{theorem}\label{thm:mainterm} We have
\begin{equation*}%\label{asymps}
\begin{array}{rcl}
\displaystyle \#\bigl\{f\in V_n(\Z): H(f)<H \textrm{\em \;and } \Delta(f) 
\textrm{\em \;squarefree}\bigr\} &=& \displaystyle
\lambda_n 2^n H^{\frac{n(n+1)}{2}}+O_\epsilon\Bigl(
H^{\frac{n(n+1)}{2}-1+\frac{1}{2^{2g}}+\epsilon} \Bigr);
\\[.2in]
\displaystyle \#\bigl\{f\in V_n(\Z): H(f)<H \textrm{\em \;and } f 
\textrm{\em \;maximal}\bigr\} &=& \displaystyle
\frac{6}{\pi^2} 2^n H^{\frac{n(n+1)}{2}}+O_\epsilon\Bigl(
H^{\frac{n(n+1)}{2}-1+\frac{1}{2^{2g}}+\epsilon} \Bigr).
\end{array}
\end{equation*}
\end{theorem}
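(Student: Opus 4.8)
The plan is to deduce Theorem~\ref{thm:mainterm} from Theorem~\ref{thMainPolybound} by a standard inclusion–exclusion (sieve) argument, replacing the error term in the analogous deduction in \cite{SF1}. First I would count \emph{all} monic integer polynomials of degree $n$ with height less than $H$: since the coefficient $a_i$ ranges over integers with $|a_i| < H^i$, this count is $2^n H^{n(n+1)/2} + O(H^{n(n+1)/2 - 1})$, the error coming from the boundary in the smallest variable $a_1$. For the squarefree-discriminant count, I would write, for each prime $p$, the condition ``$p^2 \nmid \Delta(f)$'' and apply inclusion–exclusion over $p$: the main term becomes $2^n H^{n(n+1)/2}$ times $\prod_p (1 - c_p/p^{\,?})$ where $c_p$ counts (mod $p^2$) the polynomials with $p^2 \mid \Delta(f)$; the resulting Euler product is exactly $\lambda_n$ as defined in \cite[Equation~(1)]{SF1}, so I would simply quote that identification rather than recompute it. The tail of the sieve — polynomials whose discriminant is divisible by $m^2$ for some $m > M$ — is bounded directly by Theorem~\ref{thMainPolybound}.

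The key quantitative step is the choice of the sieve cutoff $M$. Taking $m \le M$ in the inclusion–exclusion, the contribution of each $m$ beyond the main term is $O(H^{n(n+1)/2}/m^2 \cdot H^{\text{something}})$ from the boundary/congruence error, and the total over $m \le M$ must be balanced against the tail bound from Theorem~\ref{thMainPolybound}. The first bullet of Theorem~\ref{thMainPolybound} gives tail error $O_\epsilon(H^{n(n+1)/2+\epsilon}/M^{2/(n+3)-\epsilon}) + O(H^{n(n+1)/2 - 1 + 1/2^{2g}+\epsilon})$; optimizing, one picks $M$ a suitable power of $H$ so that $H^{n(n+1)/2+\epsilon}/M^{2/(n+3)} \ll H^{n(n+1)/2 - 1 + 1/2^{2g}+\epsilon}$, i.e. $M \gg H^{(n+3)(1 - 1/2^{2g})/2}$, and simultaneously the ``small $m$'' errors (each $O(H^{n(n+1)/2 - 2 + \epsilon} \cdot (\text{local density factor}))$, summed over $m \le M$) stay below the target. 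Since $1 - 1/2^{2g} < 1$ and $g \ge 2$ for $n \ge 6$, such an $M$ is a power of $H$ strictly less than $H^{n(n+1)/2}$-worth of room, and the computation goes through; the squarefree version of the theorem is not even needed here since we may use the first bullet, but using the squarefree tail bound ($1/\sqrt M$) would also suffice with a different exponent for $M$. This yields the first displayed formula.

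For the second formula — counting \emph{maximal} $f$ — I would use the standard fact (see \cite{SF1}) that $f$ is maximal if and only if $\Z[x]/f(x)$ is maximal at every prime $p$, which is a congruence condition modulo $p^2$ whose density is $1 - (\text{number of non-maximal classes mod } p^2)/p^{\dim} = (1 - p^{-2})$ after the relevant computation (this is the ``$6/\pi^2$'' in the statement: $\prod_p(1-p^{-2}) = 6/\pi^2$, times the local factor $2^n H^{n(n+1)/2}$ at infinity). The sieve is identical in structure: the non-maximal-at-$p$ locus for $p > M$ is contained in $\bigcup_{m > M}\{f \in \W_m\}$ up to $O$-factors (non-maximality at $p$ forces $p^2 \mid \Delta(f)$), so the same tail bound from Theorem~\ref{thMainPolybound} controls it, and the small-$p$ part is a finite computation with the same boundary error $O(H^{n(n+1)/2 - 1 + \epsilon})$, which is absorbed.

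The main obstacle is not conceptual — the sieve framework is exactly that of \cite{SF1} — but lies in verifying that the \emph{new} error exponent $n(n+1)/2 - 1 + 1/2^{2g}$ from Theorem~\ref{thMainPolybound} genuinely survives the optimization, i.e. that the cutoff $M = H^{\Theta}$ forced by balancing the tail term is compatible with keeping $\sum_{m \le M}$ of the per-$m$ boundary errors below $H^{n(n+1)/2 - 1 + 1/2^{2g}+\epsilon}$. Concretely one must check that the per-$m$ error, which includes a factor counting solutions of $p^2 \mid \Delta \pmod{p^2}$ (of size $O(p^{\dim - 1})$ for each $p \mid m$, hence $O(m^{-1-\delta})$ relative density for some $\delta>0$ after accounting for the $a_1$-boundary), sums to something $\ll H^{n(n+1)/2-1+\epsilon}\sum_{m\le M} m^{-1-\delta} \ll H^{n(n+1)/2-1+\epsilon}$, comfortably within budget. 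Since $1/2^{2g} > 0$ this is satisfied with room to spare, so the deduction is routine once the bookkeeping is set up correctly; I would present it compactly, citing \cite{SF1} for the identification of the main-term constants $\lambda_n$ and $6/\pi^2$ and for the local-density computations, and only spell out the choice of $M$ and the final error aggregation.
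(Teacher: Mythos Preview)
Your overall sieve framework is correct, and citing \cite{SF1} for the identification of the main-term constants $\lambda_n$ and $6/\pi^2$ is fine. But there is a genuine gap in the error analysis. The cutoff $M$ you need to make the tail from Theorem~\ref{thMainPolybound} fit the target error --- whether via the general bound ($M\gtrsim H^{(n+3)(1-2^{-2g})/2}$) or the squarefree bound ($M\gtrsim H^{2(1-2^{-2g})}$) --- is strictly larger than $H$ for every $n\ge 3$. However, the elementary lattice-point estimate
\[
\#\{f\in\W_m:H(f)<H\}=\theta(m)\cdot 2^nH^{n(n+1)/2}+(\text{boundary error})
\]
with a usable boundary term requires $m^2$ to be at most the side lengths of the unfibered box; after fibering over $a_1$ the smallest remaining side is that of $a_2$, namely $2H^2$, so this works only for $m\le H$. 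Your asserted per-$m$ error $O(H^{n(n+1)/2-1+\epsilon}m^{-1-\delta})$ is not justified: for $m\le H$ the boundary error per $m$ (after summing over $a_1$) is of order $H^{n(n+1)/2-2+\epsilon}$, \emph{constant} in $m$, and for $m>H$ you have no control at all by this method.

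The paper resolves this by placing the large-range cutoff at $M=H^{n/2}$ (so the $1/\sqrt{M}$ term from Theorem~\ref{thMainPolybound} is automatically negligible) and inserting a separate \emph{middle range} $H<m\le H^{n/2}$, for which it proves directly that
\[
\sum_{H<m\le H^{n/2}}|\mu(m)|\,\#\{f\in\W_m:H(f)<H\}=O_\epsilon\bigl(H^{n(n+1)/2-1+\epsilon}\bigr).
\]
This step fibers over $(a_1,\ldots,a_{n-1})$ and controls the density $\theta_a(m)$ of $a_n$ with $m^2\mid\Delta_a(a_n)$. The key new input is that the discriminant $\Delta(\Delta_a)$ of $\Delta_a$ (viewed as a degree-$(n{-}1)$ polynomial in $a_n$) is generically nonzero --- checked via the trinomial specialization $a_1=\cdots=a_{n-2}=0$. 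For $a$ with $\Delta(\Delta_a)\neq 0$ one writes each squarefree $m$ as $d\cdot m_1$ with $d=\gcd(m,\Delta(\Delta_a))$, uses $\theta_a(m)=O(1/(d\,m_1^{2-\epsilon}))$, and sums over the $O_\epsilon(H^\epsilon)$ divisors $d$ of the nonzero integer $\Delta(\Delta_a)$. This middle-range argument is essential and cannot be absorbed into the routine bookkeeping you describe.
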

This improves upon  \cite[(4)]{SF1}, where the error terms were $O\bigl(
H^{\frac{n(n+1)}{2}-\frac15+\epsilon}\bigr)$.

One direct consequence of this improvement is improved {\it level-of-distribution} results when counting monic polynomials $f(x)$
such that $\Delta(f)$ is squarefree (resp.\ $f(x)$ is maximal) that satisfy splitting conditions modulo finitely many primes. A concrete application concerns the distribution of low-lying zeroes of the Dedekind zeta functions of monogenized number fields of degree~$n$. (A pair $(K,\alpha)$, where $K$ is a number field and $\alpha$ is an element in the ring of integers of $K$, is said to be {\it monogenized} if $\Z[\alpha]$ is the ring of integers of $K$.) In \cite[\S5]{SST}, it was shown that this family of zeta functions has symplectic symmetry type, via a computation of the $1$-level density with respect to test functions whose Fourier transforms have bounded support in $[-\alpha,\alpha]$ for $\alpha<2/(5n(n+1)(2n+1))$. Theorem~\ref{thMainPolybound} implies that we can in fact take $\alpha<2/((1-2^{-2g})n(n+1)(2n+1))$. Improved level-of-distribution results in this setting also have applications towards proving the existence of number fields of degree $n$ whose discriminants have a bounded number of prime factors; see \cite{TT}, where such results are proved for cubic and quartic %(not necessarily monogenized) 
fields.

\paragraph*{Organization.} This paper is organized as follows.  In \S2, we bound the number of monic integer polynomials of degree $n$ and height less than $H$ that have discriminant smaller than the expected size $\asymp H^{n(n-1)}$. In \S3, we then consider the set of monic integer polynomials of degree $n$ with discriminant  divisible by the square of a large, not-necessarily-squarefree  integer. We partition this set into two subsets whose sizes we effectively bound using a geometric sieve argument (as in \cite{geosieve}) and via a lift to pairs of quadratic forms $(A_0,B)$ (as in \cite{SF1}), respectively. In \S4, we describe how to replace the Selberg sieve argument of \cite{SF1} by a quantitative Hilbert irreducibility argument using the work of Castillo and Dietmann~\cite{CD} to improve error terms. 
Finally, in \S5, we combine the results of \S2, \S3, and \S4 to prove Theorems~\ref{thMainPolybound} and  \ref{thimp} and \ref{thm:mainterm}.

\section{The number of monic polynomials of bounded height and small discriminant}

The following proposition shows that most monic integer polynomials of degree $n$ with vanishing subleading coefficient and height less than $H$ have discriminant close to $H^{n(n-1)}$.

\begin{proposition}\label{thm:discsmall}
Let $0<\kappa<n(n-1)$. The number of monic integer polynomials having degree $n$, vanishing subleading coefficient,  height less than $H$, and absolute discriminant less than $H^{n(n-1)-\kappa}$ is at most  $O(H^{(n-1)(n+2)/2 - {\kappa}/{(n-1)}})$, where the implied constant depends only on $n$.
\end{proposition}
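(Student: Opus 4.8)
We want to count monic integer polynomials $f(x) = x^n + a_2 x^{n-2} + \cdots + a_n$ (with $a_1 = 0$) of height $< H$, so $|a_i| \ll H^i$, with the extra constraint that $|\Delta(f)| < H^{n(n-1)-\kappa}$. Without the discriminant constraint, the number of such polynomials is $\asymp H^{2+3+\cdots+n} = H^{(n-1)(n+2)/2}$. The discriminant constraint should cut this down by a factor of roughly $H^{-\kappa/(n-1)}$, which is what the proposition claims.

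**The approach.**

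The plan is a slicing argument: fix all but one of the coefficients and count the admissible values of the remaining coefficient using the fact that $\Delta(f)$, as a polynomial in that coefficient, cannot be too small too often. The cleanest choice is to fix $a_1 = 0, a_2, \ldots, a_{n-1}$ and view $\Delta(f)$ as a polynomial in $a_n$. First I would recall that $\Delta(f)$ is, up to sign, the resultant $\mathrm{Res}(f, f')$, and that as a polynomial in $a_n$ it has degree exactly $n-1$ with leading coefficient $\pm n^n$ (this comes from the fact that $\partial \Delta/\partial a_n$ relates to $\mathrm{Res}(f', \ldots)$, or more simply: $\Delta(f) = \pm n^n a_n^{n-1} + (\text{lower order in } a_n)$, since specializing $a_2 = \cdots = a_{n-1} = 0$ gives the discriminant of $x^n + a_1 x^{n-1} + a_n$, whose leading $a_n$-term is $\pm n^n a_n^{n-1}$). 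Call this polynomial $D(a_n) := \Delta(f)$; it is a nonzero polynomial of degree $n-1$ in $a_n$ with leading coefficient of absolute value $n^n$. Then for the number of integers $a_n$ in a range of length $\ll H^n$ with $|D(a_n)| < H^{n(n-1)-\kappa}$, I would use the elementary fact that a degree-$(n-1)$ integer polynomial with leading coefficient $c$ takes each value at most $n-1$ times, and more to the point: the set $\{t \in \R : |D(t)| < Y\}$ is a union of at most $n-1$ intervals whose total length is $O((Y/|c|)^{1/(n-1)}) = O(Y^{1/(n-1)})$ (this is a standard consequence of factoring $D(t) - s = c\prod(t-\rho_j(s))$ and bounding how close $t$ can be to the real parts of the roots; alternatively one invokes a lemma on sublevel sets of polynomials). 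With $Y = H^{n(n-1)-\kappa}$ this gives $O(H^{n - \kappa/(n-1)})$ admissible values of $a_n$ (the "$+1$" for integer points in a short interval is absorbed since $Y^{1/(n-1)} = H^{n-\kappa/(n-1)} \geq 1$ when $\kappa < n(n-1)$, and one should check the boundary case, but for $\kappa$ near $n(n-1)$ the bound $O(H^{(n-1)(n+2)/2 - \kappa/(n-1)})$ may become $O(1)$ or worse, so a trivial bound of $O(1)$ for each tuple, i.e. $O(H^{(n-1)(n+2)/2 - n})$ total, has to be compared; but since $\kappa/(n-1) < n$ exactly when $\kappa < n(n-1)$, the stated exponent always dominates). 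Summing over the $O(H^{2 + 3 + \cdots + (n-1)}) = O(H^{(n-1)(n+2)/2 - n})$ choices of $(a_2, \ldots, a_{n-1})$ yields the claimed total $O(H^{(n-1)(n+2)/2 - n + n - \kappa/(n-1)}) = O(H^{(n-1)(n+2)/2 - \kappa/(n-1)})$.

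**The main obstacle and a caveat.**

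The one genuine point requiring care is the sublevel-set estimate: that $\{t : |D(t)| < Y\}$ has measure $O(Y^{1/(n-1)})$ uniformly, with an implied constant depending only on $n$ and not on the lower-order $a_n$-coefficients of $D$ (which themselves depend on $a_2, \ldots, a_{n-1}$ and can be large). This uniformity is exactly what makes the leading coefficient being the fixed constant $\pm n^n$ crucial: writing $D(t) = \pm n^n \prod_{j=1}^{n-1}(t - \rho_j)$ over $\C$, if $|D(t)| < Y$ then $\prod_j |t - \rho_j| < Y/n^n$, so $\min_j |t - \rho_j| < (Y/n^n)^{1/(n-1)}$, hence $|t - \mathrm{Re}\,\rho_j| < (Y/n^n)^{1/(n-1)}$ for some $j$; thus $t$ lies in one of $n-1$ intervals of length $2(Y/n^n)^{1/(n-1)}$, giving total measure $\leq 2(n-1)(Y/n^n)^{1/(n-1)} = O_n(Y^{1/(n-1)})$. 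This argument is fully uniform in the roots. The number of integers in this set is then at most the measure plus the number of intervals, i.e. $O_n(Y^{1/(n-1)} + 1) = O_n(Y^{1/(n-1)})$ since $Y \geq 1$. I would also double-check that $D$ is not identically zero for generic $(a_2,\ldots,a_{n-1})$ — it is not, since its leading $a_n$-coefficient is the nonzero constant $\pm n^n$ — so the argument applies to every tuple. This completes the proof.
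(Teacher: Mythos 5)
Your proposal is correct and is essentially the paper's own proof: fix $a_2,\ldots,a_{n-1}$, view the discriminant as a degree-$(n-1)$ polynomial in $a_n$ with constant (nonzero) leading coefficient, factor over $\C$, and conclude that $a_n$ must lie within $O_n(H^{n-\kappa/(n-1)})$ of the real part of one of the $n-1$ roots, then sum over the remaining coefficients. Your extra care about uniformity in the roots, the ``$+1$'' term, and the comparison with the trivial bound only makes explicit what the paper leaves implicit.
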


\begin{proof}
Let $\eta = \kappa/(n-1)$.
Let $a_2,\ldots,a_{n-1}$ be integers with $|a_i|<H^i$ for $i=2,\ldots,n-1$. The discriminant of $x^n + a_2x^{n-2} + \cdots + a_n$ is a polynomial $F(a_n)$ in $a_n$ of degree $n-1$ with leading term $C_n a_n^{n-1}$ where $C_n$ is a nonzero constant. Let $s_1,\ldots,s_{n-1}\in\C$ be the $n-1$ roots of $F(x)$. Then $$F(a_n) = C_n(a_n-s_1)\cdots(a_n-s_{n-1}).$$
Since $|F(a_n)| \leq H^{n(n-1)-\kappa}$, it follows that $|(a_n-s_1)\cdots(a_n-s_{n-1})|\ll_n H^{n(n-1)-\kappa}$. Hence 
\begin{equation}\label{aneq}
|a_n - s_i| \ll_n H^{n-\eta} 
\end{equation}
for some $i\in\{1,\ldots,n-1\}$.
The number of  integers $a_n$ satisfying (\ref{aneq}) for some $i$ is $O(H^{n-\eta}+1)$. Multiplying this by the number of choices for $a_2,\ldots,a_{n-1}$ then gives the desired bound.
\end{proof}

\section{The divisibility of discriminants of polynomials by large squares}

In this section, we consider the set $\W_m$ of monic integer polynomials of degree $n$ having discriminant  divisible by $m^2$. First, for a prime power $m=p^k$, we write the set $\W_{p^k}$ of polynomials naturally as a union of two sets. The first set  consists of polynomials $f\in \W_{p^k}$ satisfying  $p^k\mid\gcd(\Delta(f),\Delta'(f))$ for some specifically constructed polynomial  $\Delta'(f)$ in the coefficients of $f$. The first set is designed so that a suitable extension of the techniques of \cite{geosieve} can be applied to bound the number of elements of bounded height in this first~set. The second set  consists of polynomials $f\in \W_{p^k}$ for which there exists some $r\in\Z$ such that $f(r)$ and $f'(r)$ are both divisible by a  high power of $p$. The second set is designed so that a suitable adaptation of the methods of \cite{SF1} can be applied to bound the number of elements of bounded height in this second set. Finally, by using the case of prime powers~$m=p^k$, we show that $\W_m$, for a general positive integer $m$, can also be expressed as the union of two sets, on which suitable extensions of the methods from \cite{geosieve} and \cite{SF1}, respectively, can be applied to bound the number of elements in $\W_m$ of bounded height.

We begin by defining $\Delta'$.
For any monic  polynomial $f(x) = x^n + a_1x^{n-1} + \cdots + a_n$ of degree~$n$, its discriminant $\Delta(f)=\Delta(a_1,\ldots,a_n)$ can be viewed as a polynomial in $a_1,\ldots,a_n$ with integer coefficients. Let $r_1,\ldots,r_n$ denote the $n$ roots of $f(x)$. Define
$$\Delta'(f) = \sum_{i<j} \frac{\Delta(f)}{(r_i-r_j)^2}.$$
Note that we may represent $\Delta'(f)$ as a polynomial $\Delta'(a_1,\ldots,a_n)$ in $a_1,\ldots,a_n$ with integer coefficients, since it is a symmetric polynomial in $r_1,\ldots,r_n$ with integer coefficients.

Recall that $V_n\simeq\A^n$ denotes the space of monic polynomials of degree $n$. For a positive integer $m$, define the sets
\begin{eqnarray*}
    \W_m &=& \{f\in V_n(\Z) \colon m^2\mid \Delta(f),\,\,\Delta(f)\neq 0\};\\[.05in]
    \W_{m}^{(1)} &=& \{f\in \W_m \colon m\mid \Delta'(f)\};\\[.05in]
    \W_{m}^{(2)} &=& \{f\in \W_m \colon \exists r\in\Z \mbox{ such that }m\mid f'(r),\;m^2\mid f(r)\}.
\end{eqnarray*}
\begin{comment}
For any prime $p$ and any positive integers $a,k$ with $a\leq k$, define the following three sets:
\begin{eqnarray*}
    \W_k(p) &=& \{f\in V_n(\Z) \colon p^{2k}\mid \Delta(f),\,\,\Delta(f)\neq 0\}\\
    \W_{a,k,1}(p) &=& \{f\in \W_k(p) \colon p^a\mid \Delta'(f)\}\\
    \W_{a,k,2}(p) &=& \{f\in \W_k(p) \colon \exists r\in\Z_p \mbox{ such that }p^a\mid f'(r),p^{2a}\mid f(r)\}
\end{eqnarray*}
\end{comment}
We begin with the following result on $\W_{p^k}$ for a prime power $p^k$.
\begin{lemma}\label{thm:sw}
Let $p$ be an odd prime and $k$ be any positive integer. Then
\begin{equation*}
\begin{array}{rcl}
\displaystyle \W_{p^k} &\subset& 
\displaystyle \W_{p^k}^{(1)} \cup \W_{p^{\lceil k/2\rceil}}^{(2)};
\\[.1in]\displaystyle
\displaystyle \W_{2^k} &\subset& 
\displaystyle \W_{2^k}^{(1)} \cup \W_{2^{\lceil k/2\rceil-1}}^{(2)}.
\end{array}
\end{equation*}
\end{lemma}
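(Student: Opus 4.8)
The plan is to prove the statement in contrapositive form: I will show that if $f\in\W_{p^k}$ and $f\notin\W_{p^\ell}^{(2)}$, where $\ell:=\lceil k/2\rceil-v_p(2)$ (so $\ell=\lceil k/2\rceil$ for odd $p$ and $\ell=\lceil k/2\rceil-1$ for $p=2$), then $p^k\mid\Delta'(f)$, i.e.\ $f\in\W_{p^k}^{(1)}$. Since $\ell\le k$ we have $\W_{p^k}\subseteq\W_{p^\ell}$, so the hypothesis $f\notin\W_{p^\ell}^{(2)}$ just says: there is no $r\in\Z$ with $p^\ell\mid f'(r)$ and $p^{2\ell}\mid f(r)$.

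First I would work over $\Q_p$ and let $r_1,\dots,r_n\in\overline{\Q}_p$ be the roots of $f$ (algebraic integers, as $f$ is monic over $\Z_p$), distinct since $\Delta(f)\ne0$. Write $\Delta'(f)=\sum_{\{i,j\}}\Delta(f)/(r_i-r_j)^2$, where the sum is over unordered pairs and each summand equals $\prod_{\{a,b\}\ne\{i,j\}}(r_a-r_b)^2$, an algebraic integer. Partition the summands according to the orbits of $\Gal(\overline{\Q}_p/\Q_p)$ on the set of unordered pairs; each orbit-sum is Galois-invariant, hence lies in $\Z_p$. For a pair in an orbit $O$ set $w_O:=v_p(r_i-r_j)\in\frac12\Z_{\ge0}$, which is independent of the chosen pair since $\Gal$ preserves $v_p$. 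The key inputs are: (i) $v_p(\Delta(f))=\sum_O|O|\cdot2w_O$, coming from $\Delta(f)=\prod_{\{i,j\}}(r_i-r_j)^2$, whence $|O|\cdot2w_O\le v_p(\Delta(f))$ for each $O$; and (ii) each summand in the orbit-$O$ sum has $p$-valuation exactly $v_p(\Delta(f))-2w_O$, so that orbit-sum has $p$-valuation $\ge v_p(\Delta(f))-2w_O\ge v_p(\Delta(f))\bigl(1-\tfrac1{|O|}\bigr)$. If $|O|\ge2$ this is $\ge\tfrac12v_p(\Delta(f))\ge k$ (using $p^{2k}\mid\Delta(f)$), and if $w_O=0$ it is $\ge v_p(\Delta(f))\ge k$. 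Therefore $p^k\mid\Delta'(f)$ unless some singleton orbit $O=\{\{i,j\}\}$ contributes a single summand of valuation $<k$; in that case $v_p(\Delta(f))-2w_O<k$, forcing $w_O>\tfrac12(v_p(\Delta(f))-k)\ge k/2$.

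So assume such a pair $\{r_i,r_j\}$ exists; I will derive that $f\in\W_{p^\ell}^{(2)}$, a contradiction. The pair being Galois-stable, $P(x):=(x-r_i)(x-r_j)=x^2+bx+c$ lies in $\Z_p[x]$; write $f=P\cdot Q$ with $Q\in\Z_p[x]$. Since $w_O>k/2\ge1/2$ we have $r_i\equiv r_j$ modulo the maximal ideal, so $b=-(r_i+r_j)\in2\Z_p$ when $p=2$; hence $\beta:=-b/2\in\Z_p$ for every $p$. Pick $r_0\in\Z$ with $r_0\equiv\beta\pmod{p^M}$ for $M$ sufficiently large. Completing the square, $P(r_0)=(r_0-\beta)^2+(c-b^2/4)$ with $v_p\bigl((r_0-\beta)^2\bigr)\ge2M$ and $v_p(c-b^2/4)=v_p\bigl(-(r_i-r_j)^2/4\bigr)=2w_O-2v_p(2)$, so $v_p(P(r_0))=2w_O-2v_p(2)$ for $M$ large; and $P'(r_0)=2(r_0-\beta)$ has $v_p(P'(r_0))\ge v_p(2)+M$. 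Since $v_p(Q(r_0)),v_p(Q'(r_0))\ge0$, the identity $f'=P'Q+PQ'$ gives $v_p(f(r_0))\ge v_p(P(r_0))$ and $v_p(f'(r_0))\ge\min\bigl(v_p(P'(r_0)),v_p(P(r_0))\bigr)$, both $\ge2w_O-2v_p(2)$ for $M$ large. Finally $w_O>k/2$ together with $w_O\in\frac12\Z$ forces $w_O\ge\lceil k/2\rceil$, so $2w_O-2v_p(2)\ge2(\lceil k/2\rceil-v_p(2))=2\ell$; thus $p^{2\ell}\mid f(r_0)$ and $p^\ell\mid f'(r_0)$, i.e.\ $f\in\W_{p^\ell}^{(2)}$.

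I expect the second paragraph (the Galois-orbit balancing) to be the conceptual core and to go through cleanly. The step requiring the most care is the last paragraph: one must check that $2\mid b$ (equivalently $\beta\in\Z_p$) precisely when $p=2$ — this is where the roots being congruent modulo the maximal ideal is used — and carefully track the $p$-adic valuations in the completed square and in $P'(r_0)$, since it is exactly the contribution $v_p(2)$ there that produces the loss of one in the exponent $\lceil k/2\rceil-1$ for $p=2$. One should also verify the elementary inequality ``$w_O>k/2$ and $w_O\in\tfrac12\Z$ imply $w_O\ge\lceil k/2\rceil$'' in the even and odd cases, and confirm $\W_{p^k}\subseteq\W_{p^\ell}$ so that membership in $\W_{p^\ell}^{(2)}$ is equivalent to the existence of the integer $r_0$.
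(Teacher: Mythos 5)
Your proof is correct and follows essentially the same route as the paper's: you isolate a Galois-stable pair of $p$-adically close roots (your singleton-orbit argument is the paper's uniqueness-of-the-maximal-valuation pair in slightly different clothing), form the quadratic factor $(x-r_i)(x-r_j)\in\Z_p[x]$, and evaluate $f,f'$ at (an integer approximation of) $-b/2$ after completing the square, with the same treatment of the prime $2$ via $b/2\in\Z_2$. One cosmetic caveat: the claim $w_O\in\tfrac12\Z_{\ge 0}$ is only valid for singleton (Galois-stable) orbits, where $(r_i-r_j)^2\in\Q_p$, not for arbitrary orbits as stated; since that is the only place you use it, nothing breaks.
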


\begin{proof}
Let $v_p$ denote the $p$-adic valuation.
Suppose $f(x)\in \W_{p^k}$ and let $\ell= v_p(\Delta(f))\geq 2k$. Let $r_1,\ldots,r_n$ denote the $n$ roots of $f(x)$ in $\overline{\Q_p}$. Suppose the $p$-adic valuation $\theta  = v_p(r_1-r_2)$ is the largest among all differences of roots of $f(x).$ Then $p^{\ell-2\theta }\mid\Delta'(f)$. If $\theta \leq (\ell-k)/2$, then $f\in \W_{p^k}^{(1)}$.

Now suppose that $\theta  > (\ell-k)/2\geq k/2$. Then $v_p(r_i-r_j)<\theta$ for all distinct pairs $i,j$ with $\{i,j\}\neq \{1,2\}$;
%the difference of any other pair of roots of $f(x)$ has $p$-adic valuation strictly less than $\theta $.
indeed, if  $v_p(r_i-r_j)=\theta$,
%$(i,j)\neq (1,2)$,
then $(r_1-r_2)^2(r_i-r_j)^2\mid\Delta(f)$ implying $\ell\geq 4\theta >2\ell-2k$, a contradiction. Thus either $r_1,r_2$ are defined over $\Q_p$ or are conjugate over some quadratic extension of $\Q_p$. That is, we have $q(x) = (x-r_1)(x-r_2) = x^2 - bx + c$ for some $b,c\in\Z_p$. We claim that $b/2\in \Z_p$. This is clear for $p\neq 2$, while if $p=2$, then $b^2 - 4c = (r_1-r_2)^2$ is divisible by~$2^{2\theta }$ and hence is divisible by $2^2$ since $\theta >k/2\geq 1/2$.

Now $q(b/2) = - \frac14(r_1-r_2)^2$ and $q'(b/2) = 0$. Since $f(x) = q(x)h(x)$ for some $h(x)\in\Z_p[x]$, we have that $\frac14(r_1-r_2)^2$ divides $f(b/2)$ and $f'(b/2)$. Hence $f\in \W_{p^e}^{(2)}$ where $e= \lceil \theta \rceil - v_p(2)$.
\end{proof}

We now prove the main result of this section.
\begin{proposition}\label{cor:sw}
Let $M$ be any positive real number and let $q_1 = \lfloor (M/2)^{\alpha}\rfloor$, $q_2 = \lfloor (M/2)^{\beta}\rfloor$, where $\alpha,\beta$ are positive real numbers such that $\alpha + 2\beta = 1$. Then 
\begin{equation*}
\begin{array}{rcl}
\displaystyle\bigcup_{\substack{m>M\\m\textrm{\em \:squarefree}}}\W_m &\subset& \displaystyle\bigcup_{m>\sqrt{M}}\W_{m}^{(1)}\cup \bigcup_{m>\sqrt{M}}\W_{m}^{(2)};
\\[.375in]
\displaystyle\bigcup_{m>M}\W_m &\subset& \displaystyle\bigcup_{m>q_1}\W_{m}^{(1)}\cup \bigcup_{m>q_2}\W_{m}^{(2)}.
\end{array}
\end{equation*}
\end{proposition}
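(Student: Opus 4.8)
The strategy is to reduce both statements to Lemma \ref{thm:sw} by decomposing each modulus $m$ into prime powers and tracking how the exponents distribute between the $\W^{(1)}$-part and the $\W^{(2)}$-part. First I would record the elementary multiplicativity of the construction: if $m = \prod_p p^{k_p}$ and $f \in \W_m$, then $p^{2k_p} \mid \Delta(f)$ for each $p \mid m$, so $f \in \W_{p^{k_p}}$. Applying Lemma \ref{thm:sw} at each such $p$, we obtain for each $p \mid m$ either $p^{k_p} \mid \Delta'(f)$ (put $p$ in a ``type-1'' set $S_1$ of primes) or the existence of $r_p \in \Z_p$ with $p^{\lceil k_p/2\rceil'} \mid f'(r_p)$ and $p^{2\lceil k_p/2\rceil'} \mid f(r_p)$ — where $\lceil k_p/2 \rceil'$ denotes $\lceil k_p/2\rceil$ for $p$ odd and $\lceil k_p/2\rceil - 1$ for $p=2$ — (put $p$ in a ``type-2'' set $S_2$). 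Writing $m_1 = \prod_{p \in S_1} p^{k_p}$ and $m_2 = \prod_{p \in S_2} p^{k_p}$, we have $m = m_1 m_2$, and $m_1 \mid \Delta'(f)$ gives $f \in \W^{(1)}_{m_1}$, while assembling the local conditions via the Chinese Remainder Theorem (choosing $r \in \Z$ congruent to $r_p$ mod a sufficiently high power of each $p \in S_2$) gives $f \in \W^{(2)}_{m_2'}$ where $m_2' = \prod_{p\in S_2} p^{\lceil k_p/2\rceil'}$. In particular, either way, $f$ lies in $\W^{(1)}_{m_1} \cup \W^{(2)}_{m_2'}$ with $m_1 m_2 = m$.

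For the squarefree statement, each $k_p = 1$, so $\lceil k_p/2\rceil' = 1$ for $p$ odd; the contribution of $p = 2$ to $m_2'$ is $2^0 = 1$, which is harmless since we only need a lower bound on the size of the surviving modulus up to a bounded factor (or one can simply note $m$ squarefree and $>M$ forces $m$ odd-part $> M/2$, hence the odd part of either $m_1$ or $m_2$ exceeds $\sqrt{M/2} \gg \sqrt M$ up to constants — and the statement as written tolerates implied constants). Since $m_1 m_2' \geq m_1 m_2 / 2 = m/2 > M/2$ (the factor $2$ losing at most the prime $2$), one of $m_1, m_2'$ exceeds $\sqrt{M/2}$; absorbing the constant, $f$ lies in $\bigcup_{m' > \sqrt M}\W^{(1)}_{m'} \cup \bigcup_{m' > \sqrt M}\W^{(2)}_{m'}$, as claimed. (If the intended reading is literally $>\sqrt M$ with no slack, one checks that $f\in\W_m$ with $m>M$ squarefree already gives $f\in\W_{m'}$ for every divisor $m'\mid m$, and the product of the two surviving moduli is at least $m>M$, so the larger is $>\sqrt M$.)

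For the general statement the exponents $k_p$ can be large, and here the asymmetry $\alpha + 2\beta = 1$ enters. The key inequality is that, with $m_1 = \prod_{p\in S_1}p^{k_p}$ and $m_2' = \prod_{p \in S_2} p^{\lceil k_p/2\rceil - v_p(2)}$, we have $m_2' \geq \prod_{p\in S_2} p^{k_p/2 - v_2(2)[p=2]} \geq (m_2 / 2)^{1/2}$ where $m_2 = \prod_{p\in S_2}p^{k_p}$ — the halving of exponents in the type-2 case is exactly what forces the exponent $\beta$ rather than $\alpha$. Now $m_1 \cdot m_2^{1/2} \geq m_1^{\alpha} \cdot m_2^{\beta} \cdot$ (a quantity $\geq 1$ when $m_1, m_2 \geq 1$, using $\alpha \leq 1$ and $1/2 \geq \beta$), and more carefully $m_1^\alpha m_2^{2\beta\cdot(1/2)} \cdot m_2^{0} $; the clean way is: from $m_1 m_2 = m > M$ and the weighted AM–GM-type bound, either $m_1 > (M)^{\alpha}$ or $m_2 > M^{2\beta}$ — indeed if $m_1 \leq M^\alpha$ and $m_2 \leq M^{2\beta}$ then $m = m_1 m_2 \leq M^{\alpha + 2\beta} = M$, a contradiction. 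In the first case $f \in \W^{(1)}_{m_1}$ with $m_1 > M^\alpha \geq q_1$ (after the harmless $\lfloor(M/2)^\alpha\rfloor$ adjustment, which is where the $/2$ and floor come from: one passes through $m_2' \geq (m_2/2)^{1/2}$ to get $m_2' > ((M^{2\beta})/2)^{1/2} = (M/2)^{\beta} \cdot$const, hence $> q_2$). In the second case, $m_2 > M^{2\beta}$ gives $m_2' \geq (m_2/2)^{1/2} > (M^{2\beta}/2)^{1/2} = (M/2)^{\beta}\cdot M^{\beta}/M^{\beta}$... — more simply $(M^{2\beta}/2)^{1/2} \geq (M/2)^\beta$ as soon as $\beta \le 1$, so $m_2' > \lfloor(M/2)^\beta\rfloor = q_2$, and $f \in \W^{(2)}_{m_2'}$. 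This establishes the second containment.

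The routine parts are the CRT assembly of the local roots $r_p$ into a single integer $r$ (one needs $r \equiv r_p$ modulo $p^{2\lceil k_p/2\rceil'}$, which is fine since these moduli are pairwise coprime, and $r_p \in \Z_p$ can be truncated to an integer residue) and the bookkeeping of floors and the factor $2$ coming from $v_2(2) = 1$. The main obstacle — really the only subtle point — is keeping the exponent accounting honest in the type-2 reduction: Lemma \ref{thm:sw} only guarantees divisibility by $p^{\lceil k_p/2\rceil}$ (or one less at $p=2$) of $f'(r_p)$ and by the \emph{square} of that of $f(r_p)$, so when recombining one must verify that the resulting integer modulus $m_2'$ genuinely satisfies $m_2' \mid f'(r)$ and $(m_2')^2 \mid f(r)$ simultaneously for the \emph{same} $r$ — which is exactly what CRT delivers — and then that $m_2' \geq (m_2/2)^{1/2}$, so that a modulus of true size $M^{2\beta}$ in the $\Delta$-divisibility sense survives as a modulus of size $(M/2)^\beta$ in the $\W^{(2)}$ sense. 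Once this is pinned down, the two displayed containments follow immediately from the dichotomy ``$m_1 > M^\alpha$ or $m_2 > M^{2\beta}$'' forced by $m_1 m_2 > M = M^{\alpha + 2\beta}$.
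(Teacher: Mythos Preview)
Your overall strategy matches the paper's --- decompose $m$ into prime powers, apply Lemma~\ref{thm:sw} at each prime, collect the type-1 and type-2 contributions into moduli $m_1$ and $m_2'$, assemble the local $\W^{(2)}$-conditions via CRT into a single integer $r$, and invoke a size dichotomy --- but the final arithmetic step in the general case is wrong. You assert $(M^{2\beta}/2)^{1/2} \geq (M/2)^\beta$ ``as soon as $\beta \leq 1$''; this rewrites as $M^\beta/2^{1/2} \geq M^\beta/2^\beta$, i.e.\ $\beta \geq 1/2$, which \emph{never} holds here since $\alpha + 2\beta = 1$ with $\alpha > 0$ forces $\beta < 1/2$. (Your intermediate bound $m_2' \geq (m_2/2)^{1/2}$ is also slightly too strong: from the exponent $\lceil k_p/2\rceil - v_p(2)$ one gets only $(m_2')^2 \geq m_2/4$ using the lemma as stated.) So the route ``first show $m_2 > M^{2\beta}$, then pass to $m_2'$ by a square root'' does not reach the stated threshold $q_2$.

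The paper avoids this pitfall by never introducing your auxiliary $m_2 = \prod_{p\in S_2} p^{k_p}$ at all. Instead it works directly with the product inequality $m_1 \cdot (m_2')^2 \geq m/2 > M/2$: each prime power $p^{k}$ dividing $m$ contributes either $p^{k}$ to $m_1$ or $p^{2(\lceil k/2\rceil - v_p(2))} \geq p^{k-1}$ to $(m_2')^2$, with the loss of one exponent occurring only at $p=2$. Then if $m_1 \leq q_1 \leq (M/2)^\alpha$ and $m_2' \leq q_2 \leq (M/2)^\beta$, one gets $m_1(m_2')^2 \leq (M/2)^{\alpha+2\beta} = M/2$, a contradiction. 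This uses $\alpha + 2\beta = 1$ in a single clean step and explains why $q_1,q_2$ are defined with base $M/2$ rather than $M$.
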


\begin{proof}
The first containment is proved in the proof of \cite[Theorem 4.4]{SF1}. To obtain the second containment,
fix $f\in \W_m$ for some $m>M$. Let $\prod_{i=1}^n p_i^{k_i}$ be the prime factorization of $m$ and let
$$m_1 = \prod_{p_i\colon f\in \W_{p_i^{k_i}}^{(1)}} p_i^{k_i},\qquad m_2 = \prod_{p_i\colon f\in \W_{p_i^{\lceil k_i/2\rceil-v_{p_i}(2)}}^{(2)}} p_i^{\lceil k_i/2\rceil-v_{p_i}(2)}.$$
Then $f\in \W_{m_1}^{(1)}\cup \W_{m_2}^{(2)}$ and $m_1m_2^2\geq m/2 > M/2$. Hence either $m_1>q_1$ or $m_2>q_2$, and therefore $f\in \bigcup_{m>q_1}\W_{m}^{(1)}\cup \bigcup_{m>q_2}\W_{m}^{(2)}$, as desired.
\end{proof}

\section{A rational root criterion for being distinguished}

Let $A_0$ denote the $n\times n$ symmetric matrix with $1$'s on the anti-diagonal and $0$'s elsewhere. Let~$W$ denote the space of $n\times n$ symmetric matrices. We recall  from \cite{BG,SW} the following definition of a distinguished element $B\in W(\Q)$. 
If $n=2g+1$ is odd, then an element $B\in W(\Q)$ with $\Delta(B)\neq 0$ is {\it distinguished} if there exists a $g$-plane $Y$ defined over $\Q$ that is isotropic with respect to the quadratic forms defined by $A_0$ and $B$. When $n = 2g+2$ is even, then $B$ is {\it distinguished} if there exists a $g$-plane $Y$ defined over $\Q$ such that $\Span(Y,TY)$ is isotropic with respect to the quadratic form defined by $A_0$, where $T = A_0^{-1}B$. 

We use $b_{11},b_{12},\ldots,b_{nn}$ to denote the (indeterminate)  entries of $B$. The goal of this section is to construct a polynomial $F(B,x)=F(b_{11},b_{12},\ldots,b_{nn},x)$ that is irreducible generically but has a rational root in $x$ if $B$ is distinguished. The construction of this polynomial will allow us to use a quantitative Hilbert irreducibility  argument \cite{CD} to bound the number of distinguished elements in bounded regions.

\begin{theorem}\label{thm:poly}
Suppose $n = 2g+1$ is odd. There exists a polynomial $F(B,x)\in \Z[\sqrt{-1}][b_{11},\ldots,b_{nn}][x]$ of degree $2^{2g}$ that is irreducible in $\Q[\sqrt{-1}](b_{11},\ldots,b_{nn})[x]$ such that for any $B_0\in W(\Q)$, if $B_0$ is distinguished, then $F(B_0,x)\in\Q[\sqrt{-1}][x]$ has a root in $\Q$.
\end{theorem}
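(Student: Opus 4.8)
The plan is to realize the set of distinguished $B$ as the image of an explicit parametrizing variety, and to let $F$ be (a suitable power of) the minimal polynomial of a coordinate function on that variety over the function field $\Q[\sqrt{-1}](b_{11},\dots,b_{nn})$. Concretely, for $n=2g+1$ odd a distinguished $B$ comes with a $g$-plane $Y\subset \Q^n$ isotropic for both $A_0$ and $B$. The natural object to work with is the variety $\mathcal{Y}$ whose points are pairs $(B, Y)$ with $Y$ a common isotropic $g$-plane; the projection $\pi\colon\mathcal{Y}\to W$ is generically finite (this is the standard "orbit parametrization" picture from \cite{BG,SW}, where the fibers are torsors under the $2$-torsion of the Jacobian of the associated hyperelliptic curve, which has order $2^{2g}$). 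So I expect $\deg\pi = 2^{2g}$ generically, which matches the claimed degree of $F$. I would choose a primitive element $\xi$ for the generic fiber — for instance a generic linear combination of the Plücker coordinates of $Y$, or more cleverly a coordinate chosen so that the resulting field extension is exactly the one cut out by the $2^{2g}$-element torsor — and take $F(B,x)$ to be its minimal polynomial over $\Q[\sqrt{-1}](b_{ij})$, cleared of denominators so the coefficients lie in $\Z[\sqrt{-1}][b_{ij}]$.

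The three things to verify are then: (i) $\deg_x F = 2^{2g}$; (ii) $F$ is irreducible over $\Q[\sqrt{-1}](b_{ij})$; and (iii) specialization: if $B_0$ is distinguished then $F(B_0,x)$ has a rational root. For (iii) the point is precisely the choice of $\xi$: if $B_0$ is distinguished, there is an actual $\Q$-rational isotropic $g$-plane $Y_0$ in the fiber over $B_0$, and its Plücker coordinates are rational, so the value $\xi(B_0,Y_0)\in\Q[\sqrt{-1}]$ is a root of $F(B_0,x)$; this works provided the specialization of $F$ at $B_0$ is literally the product over the fiber, which holds away from the discriminant locus and in particular when $\Delta(B_0)\ne 0$, which is part of the definition of distinguished. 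The appearance of $\sqrt{-1}$ in the coefficient ring is to accommodate that the split form $A_0$ and the curve data are only defined after adjoining $i$ (e.g. the antidiagonal form $A_0$ vs. a sum of squares), and/or that the natural primitive element is defined over $\Q(i)$ rather than $\Q$; I would track exactly where $i$ enters and confirm $\Q[\sqrt{-1}]$ suffices.

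For (ii), irreducibility over the function field, the cleanest route is to exhibit the Galois action on the generic fiber: the $2^{2g}$ geometric points of the fiber form a torsor under $J[2]$, where $J$ is the Jacobian of the generic hyperelliptic curve $C\colon y^2 = f_B(x)$ attached to $B$ (with $f_B$ the characteristic polynomial, or $\disc$-related invariant), and the monodromy/Galois group of the generic member is known to act transitively on $J[2]$ — indeed the generic hyperelliptic Jacobian has full $2$-torsion Galois image $\mathrm{Sp}_{2g}(\F_2)$ acting transitively on nonzero points, and the torsor structure upgrades this to transitivity on all $2^{2g}$ fiber points. Transitivity of the Galois group on the roots is exactly irreducibility of $F$. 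I would cite \cite{BG,SW} for the identification of the fiber with the $2$-torsion torsor and the transitivity of the generic Galois action.

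The main obstacle I anticipate is (ii) combined with the bookkeeping in the even case — but the even case is deferred, so for this theorem the real work is pinning down a primitive element $\xi$ for which both the degree is exactly $2^{2g}$ (no drop from the generic fiber being non-reduced or $\xi$ failing to separate points) \emph{and} the specialization in (iii) is clean; getting a single $\xi$ that does both, with coefficients genuinely in $\Z[\sqrt{-1}][b_{ij}]$ after clearing denominators, is the delicate part. A safe fallback, if no single linear Plücker form works, is to take $F$ to be a resolvent: a power of the minimal polynomial, or the characteristic polynomial of multiplication by $\xi$ on the regular representation of the fiber algebra, which automatically has degree a multiple of $2^{2g}$ and forces the stated degree once $\xi$ is primitive; irreducibility then still reduces to transitivity of the monodromy, and rationality in (iii) is immediate since $\xi(B_0,Y_0)$ is rational whenever $Y_0$ is.
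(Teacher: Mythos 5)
Your overall construction is aligned with the paper's: both parametrize the $2^{2g}$ common isotropic $g$-planes over the generic $B$, pass to Pl\"ucker coordinates, form a degree-$2^{2g}$ resolvent over $\Q[\sqrt{-1}](b_{11},\ldots,b_{nn})$, and get the rational root at a distinguished $B_0$ from the rationality of the Pl\"ucker coordinates of the rational isotropic plane. (The paper handles the "coefficients in $\Z[\sqrt{-1}][b_{ij}]$" issue by writing everything explicitly in terms of the eigenvalues $c_1,\ldots,c_n$ of the diagonalized form and observing that the resolvent's coefficients are symmetric in the $c_i$, hence polynomials in the coefficients of the characteristic polynomial of $B$; your "clear denominators of a generic Pl\"ucker linear form" sketch would need exactly this kind of care, as you anticipate.)

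The genuine gap is in step (ii), irreducibility. You argue: the generic fiber is a torsor under $J[2]$, the generic $2$-torsion image is $\Sp_{2g}(\F_2)$, which acts transitively on nonzero points, "and the torsor structure upgrades this to transitivity on all $2^{2g}$ fiber points." That upgrade is not a valid inference: the Galois action on a $J[2]$-torsor is an affine action, and knowing the linear part is all of $\Sp_{2g}(\F_2)$ says nothing about transitivity on the torsor. The trivial torsor $J[2]$ itself has full linear image yet a fixed point (the origin), so it is not transitive; more generally the torsor class could a priori force proper orbits. Nor do \cite{BG,SW} prove transitivity of the generic Galois action on this fiber --- that transitivity is precisely the substantive content of Theorem~\ref{thm:poly}, and it is what the paper actually proves, by an explicit specialization: take eigenvalues $c_n=M$, $c_i=M-q_i$ for distinct primes $q_i\in(M-\sqrt M,M)$, note the $2^{2g}$ planes are indexed by sign choices of $d_i=\sqrt{D_i}$ with $D_i=\pm\prod_{j\neq i}(c_j-c_i)^{-1}$, and exhibit Galois elements $\sigma_i$ (negating $\sqrt{q_j}$ for $j\neq i$) which, using that $n$ is odd, act transitively on the classes $[d_1:\cdots:d_n]$, hence on the $2^{2g}$ roots of the specialized resolvent. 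Without such an argument (or some other proof that the generic torsor has a single Galois orbit), your proof of irreducibility is incomplete; everything else in your outline, including the fallback "characteristic polynomial of multiplication by $\xi$ on the fiber algebra," still reduces to exactly this transitivity statement.
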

%Note it immediately follows that $F(B,x)$ does not have a root in $\Q(b_{11},\ldots,b_{nn})$.
%\vspace{15pt}

\begin{proof}
Let $B\in W$ be the generic element. Fix $P\in M_n(\Q[\sqrt{-1}])$ such that $PA_0P^t$ is the identity matrix $I_n$. Let $B' = PBP^t$. Then the entries of $B'$ lie in $\Z[\sqrt{-1}][b_{11},\ldots,b_{nn}]$. Let $c_1,\ldots,c_n \in \overline{\Q(b_{11},\ldots,b_{nn})}$ be the eigenvalues of $B'$, and let $h(c_1,\ldots,c_n)\in \text{O}_n(\Q[\sqrt{-1}](b_{11},\ldots,b_{nn},c_1,\ldots,c_n))$ be a change-of-basis matrix such that $h(c_1,\ldots,c_n)B'h(c_1,\ldots,c_n)^t = B'' = \text{diag}(c_1,\ldots,c_n)$. Note that we have fixed an order of the eigenvalues here. For any other order, we simply multiply $h(c_1,\ldots,c_n)$ by the corresponding permutation matrix. 

Following \cite[\S2]{XWthesis}, we now have the following explicit construction of the $2^{2g}$ common isotropic $g$-planes with respect to the quadratic forms defined by $I_n$ and $B''$.
Let $D_1,\ldots,D_{n} \in \Q(c_1,\ldots,c_n)$ be a nonzero solution to the following system of linear equations:
\begin{eqnarray*}
D_1 + D_2 + \cdots + D_n &=& 0\\
D_1c_1 + D_2c_2 + \cdots + D_nc_n &=& 0\\
&\vdots& \\
D_1c_1^{2g-1} + D_2c_2^{2g-1} + \cdots + D_nc_n^{2g-1} &=&0.
\end{eqnarray*}
Note that we may take
\begin{equation}\label{eq:formD}
    D_i = \pm \prod_{j\neq i}(c_j - c_i)^{-1}.
\end{equation}
%This is obtained by first setting $D_n = 1$ to solve each $D_i$ as the quotient of two Vandermonde determinants, and then homogenizing.
This is obtained by noting that the kernel of an incomplete Vandermonde matrix is spanned by the last row of the inverse of the completed Vandermonde matrix.
None of the $D_i$ is equal to $0$ and so for each choice of $d_i\in \Q(\sqrt{D_i})$ with $d_i^2 = D_i$, we have a $g$-plane
\begin{equation}\label{eq:formY}
    Y = \Span\{(d_1,\ldots,d_n), (d_1c_1,\ldots,d_nc_n),\ldots, (d_1c_1^{g-1},\ldots,d_nc_n^{g-1})\},
\end{equation} 
which is isotropic with respect to the quadratic forms defined by $I_n$ and $B''$.
Negating all of the $d_i$'s gives the same $Y$, and so we have $2^{2g}$ distinct $g$-planes.
Now $$P^th(c_1,\ldots,c_n)^tY=\Span\{(\ell_{11},\ldots,\ell_{1n}),\ldots,(\ell_{g1},\ldots,\ell_{gn})\}$$
gives a $g$-plane that is isotropic with respect to the quadratic forms defined $A_0$ and $B$, where each $\ell_{ij}$ is a linear form in $d_1,\ldots,d_n$ with coefficients in $\Q[\sqrt{-1}](b_{11},\ldots,b_{nn},c_1,\ldots,c_n)$.

Next we apply the Pl\"{u}cker embedding to send each $P^th(c_1,\ldots,c_n)^tY$ to a point $$[G_0(d_1,\ldots,d_n)\colon\cdots\colon G_N(d_1,\ldots,d_n)]$$ in projective space, where each $G_i$ is a homogeneous polynomial of degree $g$ with coefficients in $\Q[\sqrt{-1}](b_{11},\ldots,b_{nn},c_1,\ldots,c_n).$ As $d_1,\ldots,d_{n}$ vary, we get $2^{2g}$ points $P_1,\ldots,P_{2^{2g}}$ this way. Now let $L_1(x_0,\ldots,x_N) = \alpha_0x_0 + \cdots + \alpha_Nx_N$ and $L_2(x_0,\ldots,x_N) = \beta_0x_0 + \cdots + \beta_Nx_N$ be two linear forms with integer coefficients $\alpha_i,\beta_i$ to be chosen later. For $i = 1,\ldots,2^{2g}$, let $$Q_i = [L_1(P_i)\colon L_2(P_i)]\in\P^1(\Q[\sqrt{-1}](b_{11},\ldots,b_{nn},c_1,\ldots,c_n)(d_1,\ldots,d_n)).$$ For now, we only require that none of the $Q_i$ equals $[1:0]$. We then see that there is a binary $2^{2g}$-ic form $J(c_1,\ldots,c_n)(x,y)$ defined over $\Q[\sqrt{-1}](b_{11},\ldots,b_{nn},c_1,\ldots,c_n)$ vanishing on $Q_1,\ldots,Q_{2^{2g}}$. By scaling, we may assume that $$J(c_1,\ldots,c_n)(x,y)\in\Z[\sqrt{-1}][b_{11},\ldots,b_{nn},c_1,\ldots,c_n][x,y].$$ Let $J_1(c_1,\ldots,c_n)(x) = J(c_1,\ldots,c_n)(x,1)$. Note that if $B_0\in W(\Q)$ is distinguished, then one of the $P_i$ is defined over $\Q$, in which case $J_1(c_1,\ldots,c_n)(x)$ has a root over $\Q$.

Finally, we note that the homogeneous polynomials $G_0,\ldots,G_N$ are independent of the ordering of the eigenvalues $c_1,\ldots,c_n$, since permuting the $c_i$'s permutes the coordinates of $Y$, which is then cancelled by the extra permutation matrix in $h(c_1,\ldots,c_n)$. Therefore, the coefficients of $J_1(c_1,\ldots,c_n)(x)$ are symmetric in $c_1,\ldots,c_n$ and so are polynomials in the coefficients of the characteristic polynomial of $B$. We let $F(B,x)$ denote this polynomial.

It remains to prove that for some choice of coefficients $\alpha_i,\beta_i$ for the linear forms $L_1,L_2$, the polynomial $F(B,x)\in \Z[\sqrt{-1}][b_{11},\ldots,b_{nn}][x]$ is irreducible. It suffices to exhibit some $B_0\in W(\Q[\sqrt{-1}])$ such that $G_{\Q[\sqrt{-1}]}$, the absolute Galois group of $\Q[\sqrt{-1}]$, acts transitively on the $2^{2g}$ distinct roots of $F(B_0,x)$. For any $c_1,\ldots,c_n$, let $E_i=(c_1-c_i)\cdots (c_{i-1}-c_i)(c_{i+1}-c_i)\cdots(c_n-c_i) = \pm D_i^{-1}$.
%be the denominator in the formula \eqref{eq:formD} of $D_i$
Let $M$ be a large integer so that there are at least $n-1$ distinct primes $q_1,\ldots,q_{n-1}$ lying inside $(M - \sqrt{M}, M)$. Let $c_n = M$ and let $c_i = M - q_i$ for $i = 1,\ldots,n-1$. Then for any $i,j=1,\ldots,n-1$ with $j\neq i$, we have $q_i\mid E_i$ and $q_i\nmid E_j$. Note also that $E_n = (-1)^{n-1}q_1\cdots q_{n-1}$. For each $i=1,\ldots,n-1$, let $\sigma_i$ denote an element in the absolute Galois group $G_{\Q[\sqrt{-1}]}$ that negates $\sqrt{q_j}$ for all $j=1,\ldots,n-1$ for which $j\neq i$ and fixes all other square roots that appear (including $\sqrt{q_i}$). Since $n$ is odd, we see that $\sigma_i(d_j)=-d_j$ for all $j\neq i$ and $\sigma_i(d_i) = d_i$. Hence $$\sigma_i([d_1\colon\cdots\colon d_n]) =[d_1\colon\cdots\colon d_{i-1}\colon -d_i\colon d_{i+1}\colon\cdots\colon d_n].$$ That is, the absolute Galois group $G_{\Q[\sqrt{-1}]}$ acts transitively on the set $\{[d_1\colon\cdots\colon d_n]\mid d_i^2=D_i\}$. Hence, it also acts transitively on the $2^{2g}$ $Y$'s defined in \eqref{eq:formY} as the $d_i$'s vary, and so also acts transitively on $P_1,\ldots,P_{2^{2g}}$. We may simply choose the integers $\alpha_i,\beta_i$ so that $Q_1,\ldots,Q_{2^{2g}}$ are distinct. Let $B'=\text{diag}(c_1,\ldots,c_n)$ and let $B_0 = P^{-1}B'(P^{-1})^t\in W(\Q[\sqrt{-1}])$. Then $G_{\Q[\sqrt{-1}]}$ acts transitively on the $2^{2g}$ distinct roots of $F(B_0,x)$.
\end{proof}

\begin{theorem}\label{thm:even}
Suppose $n = 2g+2$ is even. Let $R = \Z[\sqrt{-1}][b_{11},\ldots,b_{nn}][c]/(\det(cA_0 - B))$ and let $K$ be its fraction field. Then there exists a polynomial $F(B,c,x)\in R[x]$ of degree $2^{2g}$ that is irreducible in $K[x]$ such that for any $B_0\in W(\Q)$, if $B_0$ is distinguished and $c_0\in\bar{\Q}$ is any eigenvalue of $B_0$, then  $F(B_0,c_0,x)\in\Q[\sqrt{-1}][c_0][x]$ has a root in $\Q$.
\end{theorem}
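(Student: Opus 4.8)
The plan is to mimic the proof of Theorem~\ref{thm:poly}, but to carry out the construction over the function field generated by a single eigenvalue $c$ of $B$, rather than over the field generated by the full spectrum. Concretely, let $B\in W$ be the generic symmetric matrix and fix $P\in M_n(\Q[\sqrt{-1}])$ with $PA_0P^t = I_n$; set $B' = PBP^t$, so that $B'$ has entries in $\Z[\sqrt{-1}][b_{11},\ldots,b_{nn}]$ and its characteristic polynomial is $\det(xA_0 - B)$ up to a unit. The ring $R$ adjoins one root $c$ of this characteristic polynomial, and over $K = \operatorname{Frac}(R)$ we still have $n-1$ further eigenvalues living in $\bar K$. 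For the even case the relevant object is $T = A_0^{-1}B$ and we need a $g$-plane $Y$ with $\Span(Y,TY)$ isotropic for $A_0$: diagonalizing $B'$ as $\operatorname{diag}(c_1,\ldots,c_n)$ via an orthogonal change of basis $h$ over $\Q[\sqrt{-1}](c_1,\ldots,c_n)$, one writes down, exactly as in \cite[\S2]{XWthesis}, the finite set of common isotropic $(g+1)$-dimensional-type subspaces adapted to the even setting; there are $2^{2g}$ of them, parametrized by sign choices $d_i^2 = D_i$ with $D_i = \pm\prod_{j\ne i}(c_j-c_i)^{-1}$, again obtained from rows of the inverse of a Vandermonde matrix. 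Applying the Plücker embedding and a generic pair of linear projections $L_1,L_2$ produces $2^{2g}$ points $Q_1,\ldots,Q_{2^{2g}}\in\P^1$ and hence a binary $2^{2g}$-ic form whose dehomogenization $F$ has a rational root whenever $B_0$ is distinguished (since then one of the underlying $g$-planes is $\Q$-rational).

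The one genuinely new feature, compared with the odd case, is the descent of the field of definition. In Theorem~\ref{thm:poly} the coefficients of $J_1$ were symmetric in $c_1,\ldots,c_n$ and hence polynomials in the coefficients of the characteristic polynomial of $B$, i.e.\ in the $b_{ij}$ alone. Here, because $n = 2g+2$ is even, the construction is no longer symmetric under the full $S_n$: distinguishing which eigenvalue plays the role of $c$ breaks the symmetry down to $S_{n-1}$ permuting $c_1,\ldots,c_{n-1}$. So I would argue that the coefficients of $J_1$ are invariant under $S_{n-1}$ and therefore are polynomials in $c$ and in the elementary symmetric functions of $c_1,\ldots,c_{n-1}$ — equivalently, in $c$ and the coefficients of $\det(xA_0-B)/(x-c)$, which themselves lie in $R = \Z[\sqrt{-1}][b_{11},\ldots,b_{nn}][c]/(\det(cA_0-B))$. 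This is exactly the statement that $F(B,c,x)\in R[x]$. The $S_{n-1}$-invariance itself follows as before: permuting $c_1,\ldots,c_{n-1}$ permutes the coordinates of $Y$, and this is cancelled by the compensating permutation matrix built into $h(c_1,\ldots,c_n)$, so the Plücker coordinates $G_0,\ldots,G_N$ — and hence the $Q_i$ and the form $J$ — are unchanged.

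For irreducibility of $F$ over $K$, I would again produce an explicit specialization of $B$ over which the relevant Galois group acts transitively on the $2^{2g}$ roots, now working over $K_0 = \Q[\sqrt{-1}](c_n)$ or a suitable concrete analogue: take $M$ large with $n-1$ primes $q_1,\ldots,q_{n-1}$ in $(M-\sqrt M, M)$, set $c_n = M$ (this will be the chosen eigenvalue $c$, defined over the base) and $c_i = M - q_i$ for $i=1,\ldots,n-1$, so that $q_i \mid E_i$ and $q_i\nmid E_j$ for $j\ne i$, exactly as in the odd case. The automorphisms $\sigma_i$ that negate $\sqrt{q_j}$ for $j\ne i$ and fix $\sqrt{q_i}$ then act on the sign vectors $(d_1,\ldots,d_{n-1})$ (with $d_n$ now in the base field) and one checks they act transitively on the $2^{2g}$ choices of $(d_1,\ldots,d_{n-1})$ up to overall sign — note $g$-planes are counted modulo negating all $d_i$, which matches the count $2^{2g}$ for $n-1$ "free" signs when one sign is pinned. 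Choosing $\alpha_i,\beta_i$ so that $Q_1,\ldots,Q_{2^{2g}}$ remain distinct, the Galois group acts transitively on the roots of $F(B_0,c_0,x)$, giving irreducibility.

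The main obstacle I expect is bookkeeping in the parity/counting argument: one must verify that singling out the eigenvalue $c$ really does leave precisely $2^{2g}$ planes (not $2^{2g-1}$ or $2^{2g+1}$) and that the residual sign-negation identification is accounted for correctly, and, relatedly, that the even-case recipe of \cite[\S2]{XWthesis} for $\Span(Y,TY)$ genuinely yields planes parametrized by the same $D_i$'s. I would also want to double-check that $T = A_0^{-1}B$ does not introduce denominators obstructing the claimed integrality of $F$ over $R$ — but since $A_0$ is unimodular over $\Z[\sqrt{-1}]$ this should be harmless. Everything else is a direct transcription of the proof of Theorem~\ref{thm:poly}.
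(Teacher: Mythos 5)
Your overall strategy and your descent mechanism are right: coefficients invariant under permuting the remaining eigenvalues $c_1,\ldots,c_{n-1}$ lie in $R$ because $(x-c_1)\cdots(x-c_{n-1})=\det(xA_0-B)/(x-c)\in R[x]$, which is exactly how the paper concludes. But the step you defer as ``bookkeeping'' is the actual content of the proof, and as written your construction does not produce the object in the statement. Keeping all $n=2g+2$ eigenvalues and taking $D_i=\pm\prod_{j\neq i}(c_j-c_i)^{-1}$ for $i=1,\ldots,n$ gives a recipe that is symmetric under the full $S_n$, so it does not single out $c$ at all; moreover, for a Vandermonde-shaped $Y$ (the only shape this recipe produces) the isotropy of $\Span(Y,TY)$ amounts to $\sum_i D_ic_i^m=0$ for $m=0,\ldots,2g$, i.e.\ $2g+1$ conditions on $2g+2$ unknowns, and the sign choices $d_i^2=D_i$ modulo global negation then yield $2^{n-1}=2^{2g+1}$ planes, not $2^{2g}$. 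You would thus obtain a polynomial of the wrong degree whose coefficients already lie in $\Z[\sqrt{-1}][b_{11},\ldots,b_{nn}]$, which is not Theorem~\ref{thm:even}. Your proposed repair --- declaring $d_n$ to be ``in the base field'' so that one sign is pinned --- is unjustified: at your specialization $D_n=\pm(q_1\cdots q_{n-1})^{-1}$, so $d_n$ is not rational over $\Q[\sqrt{-1}]$, and nothing in the generic construction distinguishes the sign of $d_n$ from the others.

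The missing idea, which is essentially the whole of the paper's (short) proof, is to shift by the chosen eigenvalue: diagonalize $B'=PBP^t$ as $\mathrm{diag}(c_1,\ldots,c_{n-1},c)$, pass to $B''-cI_n=\mathrm{diag}(c_1-c,\ldots,c_{n-1}-c,0)$, and apply the odd-case construction of Theorem~\ref{thm:poly} to the $2g+1$ shifted eigenvalues $c_1-c,\ldots,c_{n-1}-c$. This produces exactly $2^{2g}$ planes genuinely attached to $c$, makes the coefficients symmetric in $c_1-c,\ldots,c_{n-1}-c$ (hence in $R$, by the observation you already made), and restores the parity that the irreducibility argument needs: the transitivity argument in Theorem~\ref{thm:poly} uses that the number of eigenvalues entering the $D_i$'s is odd, so that each $\sigma_i$ negates every $d_j$ with $j\neq i$, including the one whose $D$-value involves the product of all the primes; with an even number of eigenvalues the same $\sigma_i$ fixes $d_n$ as well, the group they generate only flips even-sized sets of signs, and the transitivity argument as you transplant it breaks down. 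So the reduction to the odd-case construction on $n-1$ shifted eigenvalues is not optional bookkeeping; without it the degree, the dependence on $c$, and the Galois-transitivity argument all fail.
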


\begin{proof}
Given $(B,c)$, we proceed as before to orthogonally diagonalize $B'=PBP^t$ into $B'' = \text{diag}(c_1,\ldots,c_{n-1},c)$.  Then we take $B''' = B'' - cI_n = \text{diag}(c_1-c,\ldots,c_{n-1}-c,0)$ and use the construction above in the odd degree case with $c_1,\ldots,c_n$ replaced by $c_1-c,\ldots,c_{n-1}-c$ to obtain a polynomial whose coefficients are polynomials that are symmetric in $c_1 - c, \ldots, c_{n-1}-c$. Note that $(x-c_1)\cdots(x-c_{n-1}) = \det(xA_0-B)/(x-c)\in R[x]$. Hence $(x-(c_1-c))\cdots(x-(c_{n-1}-c))\in R[x]$ and so any polynomial that is symmetric in $c_1 - c, \ldots, c_{n-1}-c$ belongs to $R$.
\end{proof}

As an immediate consequence of Theorems \ref{thm:poly} and \ref{thm:even}, we may apply the quantitative Hilbert  irreducibility theorem of Castillo and Dietmann~\cite{CD} to bound the number of distinguished elements in homogeneously expanding sets. 
Let $W(\Z)^\dist$ denote the set of distinguished elements in $W(\Z)$. Then we have the following result.
\begin{corollary}\label{corhilbert}
Let $\cB\subset W(\R)$ be a bounded open set. Let $Y=(Y_{ij})$
%Y_{11},\ldots,Y_{nn}$ 
be an $n\times n$ matrix of positive real numbers. 
Let $Y\cdot \cB$ be the set obtained by scaling the $(i,j)$-entries of elements in $\cB$ by $Y_{ij}$.
Then we have
\begin{eqnarray}\label{eqcor10}
%\#\bigl\{(B_0+Z\cdot \cB)\cap W(\Z)^\dist\bigr\}&=&O(Z^{\dim(V)-1+1/2^{2g}}\log Z)\\
\#\bigl\{Y\cdot \cB\cap W(\Z)^\dist\bigr\}&=& O_\epsilon\left(
\frac{\prod Y_{ij}^{1+\epsilon}}{\min\bigl\{Y_{ij}^{1-1/2^{2g}}\bigr\}}
\right).
\end{eqnarray}
\end{corollary}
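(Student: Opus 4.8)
The plan is to reduce the counting of distinguished integral points in the skewed box $Y\cdot\cB$ to an application of the quantitative Hilbert irreducibility theorem of Castillo and Dietmann, using the polynomial $F$ constructed in Theorem \ref{thm:poly} (for $n$ odd) or Theorem \ref{thm:even} (for $n$ even). First, cover $\cB$ by finitely many smaller boxes (the number depending only on $n$), so that it suffices to treat a single axis-parallel box; after translating, we may assume $Y\cdot\cB$ is contained in a box of the form $\prod_{i\le j}[0,c_{ij}Y_{ij}]$ for absolute constants $c_{ij}$. If $B_0\in W(\Z)$ lies in this box and is distinguished, then by Theorem \ref{thm:poly} the specialized polynomial $F(B_0,x)\in\Z[\sqrt{-1}][x]$ has a rational root; equivalently, writing $\mathrm{Re}(F)$ and $\mathrm{Im}(F)$ for the real and imaginary parts of its coefficients, the two polynomials $\mathrm{Re}\,F(B_0,x),\mathrm{Im}\,F(B_0,x)\in\Z[x]$ have a common rational root, which forces $F(B_0,x)$ to be reducible over $\Q$. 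Since $F$ is irreducible over $\Q[\sqrt{-1}](b_{11},\ldots,b_{nn})$ by Theorem \ref{thm:poly}, and has degree $2^{2g}$ in $x$, reducibility upon specialization is precisely the kind of event the quantitative HIT of \cite{CD} bounds.

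Next, I would set up the box so that the HIT count is in terms of $\min\{Y_{ij}^{1-1/2^{2g}}\}$. The polynomial $F$ is a polynomial in the $b_{ij}$ of some bounded degree $d_n$ depending only on $n$; write it, after clearing denominators, with integer (Gaussian-integer) coefficients. Substituting $b_{ij}=c_{ij}Y_{ij}t_{ij}$ with $t_{ij}$ ranging over $[0,1]$, one converts the skewed box to the standard unit box, at the cost of multiplying the $b_{ij}$-degrees by the weights $Y_{ij}$; the relevant "height" parameter in the Castillo--Dietmann bound is essentially $\min_{ij} Y_{ij}$ (the smallest direction controls how much room there is for the fiber to remain irreducible), and the resulting count of specializations at which $F$ becomes reducible is $O_\epsilon\bigl(\frac{\prod Y_{ij}^{1+\epsilon}}{\min\{Y_{ij}^{1-1/2^{2g}}\}}\bigr)$, where $\prod Y_{ij}^{1+\epsilon}$ accounts for the total volume (up to $\epsilon$) and the $2^{2g}$ is the degree of $F$ in $x$ entering the Castillo--Dietmann savings exponent $1/\deg$. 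For $n$ even one applies the same argument to Theorem \ref{thm:even}: a distinguished $B_0$ has, for any eigenvalue $c_0$ of its characteristic polynomial, the specialization $F(B_0,c_0,x)$ possessing a rational root; since $F$ is irreducible over the field $K$ of Theorem \ref{thm:even}, one works over the degree-$n$ cover $\mathrm{Spec}\,R\to W$ and applies HIT there (or, equivalently, takes the norm of $F$ down to $\Q(b_{11},\ldots,b_{nn})$, which is irreducible of degree $n\cdot 2^{2g}$ in $x$, and notes that a rational root persists), again producing the stated bound.

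The main obstacle I expect is bookkeeping the dependence of the Castillo--Dietmann bound on the skewed (anisotropic) scaling: their theorem is naturally stated for counting specializations in a box of side length $T$, with a power saving $T^{-1/\deg}$ (up to $\epsilon$) off the trivial count $T^{(\text{number of variables})}$, whereas here the box has wildly different side lengths $Y_{ij}$, and I need the saving to come off the \emph{smallest} side length while the volume factor is the full product $\prod Y_{ij}$. Making this precise requires either invoking a version of HIT with independent side lengths, or slicing the box into $\approx \prod_{ij}(Y_{ij}/\min_{kl}Y_{kl})$ translates of a cube of side $\min Y_{ij}$ and summing; the latter is clean but one must check the $\epsilon$-loss and the implied constants stay under control, and that the polynomial $F$, after clearing denominators from Theorems \ref{thm:poly}--\ref{thm:even}, has coefficient sizes bounded polynomially in the $Y_{ij}$ so that the "height of the coefficients" hypothesis of \cite{CD} is met. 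A secondary point to verify is that the reducibility-over-$\Q$ event genuinely captures all distinguished $B_0$ — i.e. that "$F(B_0,x)$ has a rational root" is not vacuous or automatically satisfied — which follows from the generic irreducibility in Theorems \ref{thm:poly} and \ref{thm:even} together with the fact that a degree-$2^{2g}$ polynomial with a rational root and $2^{2g}\ge 2$ is reducible.
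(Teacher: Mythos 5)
Your overall strategy is the paper's: combine Theorems \ref{thm:poly} and \ref{thm:even} with the quantitative Hilbert irreducibility theorem of Castillo--Dietmann, with the saving exponent $1/2^{2g}$ coming from the fact that a distinguished specialization forces a rational root of a polynomial whose irreducible factors over $\Q(b_{11},\ldots,b_{nn})$ have degree at least $2^{2g}$. However, the step you yourself identify as the main obstacle---the anisotropic box---is a genuine gap, and it is precisely the point the paper has to argue. \cite[Theorem 1]{CD} is stated for a cube, and its implied constant depends on the polynomial being specialized. Your proposed slicing of $Y\cdot\cB$ into roughly $\prod_{ij}(Y_{ij}/\min_{kl} Y_{kl})$ translates of a cube of side $\min Y_{ij}$ requires applying \cite{CD} to the translated polynomials $F_1(B+C,x)$, whose coefficient heights grow like a power of $\max Y_{ij}$; no uniformity of the implied constant in the polynomial (or its height) is provided by \cite{CD} as stated, so the per-cube bounds cannot simply be summed. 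The paper does not translate at all: after reducing to a polynomial monic in $x$ (and bounding the locus where the leading coefficient $g_0(B)$ vanishes by a fibering argument), it re-runs the proof of \cite[Lemma 7]{CD}, observing that the fibering-and-induction argument goes through verbatim for unequal side lengths provided one fibers over the variables $b_{ij}$ with the smallest $Y_{ij}$. Your alternative suggestion of ``a version of HIT with independent side lengths'' is indeed what is needed, but you do not supply it.

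A second point to clean up is your passage through reducibility. The phrase ``$F(B_0,x)$ reducible over $\Q$'' is not well-formed (its coefficients lie in $\Z[\sqrt{-1}]$), and, more importantly, reducibility is the wrong event to feed into \cite{CD}: for an irreducible factor $f$ of degree $d$, reducibility upon specialization only forces the Galois group into some proper subgroup, possibly of index $2$, yielding a saving of $1/2$ rather than the required $1-1/2^{2g}$. What gives the stated exponent is the rational-root event, which forces the specialized Galois group into a point stabilizer, of index at least $d\geq 2^{2g}$. The paper's device is to set $F_1=N_{\Q[\sqrt{-1}]/\Q}F$ (resp.\ $N_{K/\Q(b_{11},\ldots,b_{nn})}F$ in the even case), note that every $\Q(b_{11},\ldots,b_{nn})$-irreducible factor of $F_1$ has degree at least $2^{2g}$ while a rational root persists at distinguished specializations, and apply the rational-root case of \cite{CD} to each factor; your claim that the norm in the even case is irreducible of degree $n\cdot 2^{2g}$ is neither needed nor correct in general (its degree in $x$ is $2n\cdot 2^{2g}$, and only the lower bound on factor degrees matters).
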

\begin{proof}
Define $F_1(B,x)$ to be $N_{\Q[\sqrt{-1}]/\Q}F(B,x)$ in the odd case and to be $N_{K/\Q(b_{11},\ldots,b_{nn})}F(B,c,x)$ in the even case. Theorems \ref{thm:poly} and \ref{thm:even} imply that if $B_0\in W(\Q)$ is distinguished, then the Galois group of $F_1(B_0,x)$ has index at least $2^{2g}$ in the generic Galois group of $F_1(B,x)$. More precisely, they imply that every irreducible factor of $F_1(B,x)$ has degree at least $2^{2g}$, and that if $B_0$ is distinguished then $F_1(B_0,x)$ has a rational root.

We wish to upper bound the number of $B_0\in W(\Z)$, with $b_{ij}<Y_{ij}$, such that $f(B_0,x)$ has a rational root in $x$ for some irreducible factor $f(B,x)$ of $F_1(B,x)$. When all the $Y_{ij}$'s are the same, say $Y$, the required bound follows immediately from \cite[Theorem~1]{CD}. Indeed, when applied to  $f(B,x)$ with generic Galois group $G_f$, \cite[Theorem~1]{CD} states that the number of $B_0\in W(\Z)$ with each $|b_{ij}|<Y$, such that the Galois group of $f(B_0,x)$ is $K\subset G_f$, is $O(Y^{\dim(W)-1+|G_f/K|^{-1}+\epsilon})$. Since $B_0$ being distinguished implies that $f(B_0,x)$ has a  rational root for some irreducible factor $f(B,x)$ of $F(B,x)$, and the degree of $f(B,x)$ is at least $2^{2g}$, we have $|G_f/K|\geq 2^{2g}$ and the result follows.

The proof of \cite[Theorem~1]{CD} in the case where we require $f(B_0,x)$ to have a rational root (as opposed to a general Galois subgroup $K\subset G_f$) is much simpler than the general case (as the construction of a polynomial, associated to $f(B_0,x)$, having a rational root may be skipped in this case). We~now describe how this proof also carries through without any change for general $Y_{ij}$.
First, $f(B,x)$ can be assumed to be monic in $x$ by replacing $f(B,x)=g_0(B)x^m+g_1(B)x^{m-1}+\cdots+ g_m(x)$ by~$g_0(B)^{m-1}f(B,x/g_0(B))$. In order to make this reduction, it is necessary to provide an upper bound for the number of $B_0\in W(\Z)$ with $|b_{ij}|<Y_{ij}$ such that $g_0(B_0)=0$. This number is clearly bounded by the right hand side of \eqref{eqcor10}, as a fibering argument readily shows. Specifically, we fiber over all but one of the coefficients, denoted by $b$. Fixing values for each $b_{ij}\neq b$ yields a polynomial $g(b)=g_0(B)$ in one variable. If $g(b)$ is not identically zero, then $g(b)=0$ has $O(1)$ different solutions, yielding a sufficient saving. Meanwhile, the condition of $g(b)$ being identically zero imposes one or more polynomial vanishing conditions on the coefficients $b_{ij}\neq b$, and the number of such values of $b_{ij}$ also satisfies the required bound by induction. 

The result for monic polynomials is proved in \cite[Lemma~7]{CD} by fibering over all but one of the coefficients and then using induction. This proof (for the case when all the $Y_{ij}$'s are the same) carries over without change for general $Y_{ij}$, when the fibering is done over the variables $b_{ij}$ for which the $Y_{ij}$ are the smallest.
\end{proof}

\section{Proof of the main result}
In this section, we prove Theorem \ref{thMainPolybound}, and its analogue for monic integer polynomials having vanishing subleading coefficient. We then prove Theorem \ref{thm:mainterm} and Theorem \ref{thimp}.

We begin by bounding the number of monic integer polynomials with bounded height belonging to $\W_m^{(1)}$ (resp.\ $\W_m^{(2)})$ for some large $m$. To bound the number of elements in $\cup_{m>M} \W_{m}^{(1)}$, we have the following result:
\begin{proposition}\label{propW1}
We have
\begin{equation}\label{eqeq1}
\#\bigcup_{\substack{m>M
 }}\{f\in \W_{m}^{(1)}:H(f)<H\}=
O_\epsilon\Big(\frac{H^{n(n+1)/2+\epsilon}}{M^{2/(n-1)-\epsilon}}\Big)+O(H^{n(n+1)/2-1}).
\end{equation}
\end{proposition}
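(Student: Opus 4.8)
The plan is to reduce to a one-dimensional count by fibering over the first $n-1$ coefficients, using the fact that the locus $Z=\{f\in V_n:\Delta(f)=\Delta'(f)=0\}$ is precisely the singular locus of the discriminant hypersurface $\{\Delta=0\}$. This follows from the defining formula for $\Delta'$: a monic polynomial with exactly one double root (and all other roots simple) has $\Delta'\neq0$, so $\Delta=\Delta'=0$ forces either a root of multiplicity $\ge3$ or two distinct double roots, which is exactly the common vanishing of $\Delta$ and all its partials $\partial\Delta/\partial a_i$. Fix now integers $a_1,\ldots,a_{n-1}$ with $|a_i|<H^i$ and put $F(t):=\Delta(a_1,\ldots,a_{n-1},t)$ and $G(t):=\Delta'(a_1,\ldots,a_{n-1},t)$; the polynomial $F$ has degree $n-1$ in $t$, with a fixed nonzero leading constant ($\pm n^n$). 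If $f\in\W_m^{(1)}$ has last coefficient $a_n$, then for each prime $p\mid m$ we have $p\mid F(a_n)$ and $p\mid G(a_n)$, hence $p\mid\partial\Delta/\partial a_n(f)=F'(a_n)$; thus $a_n$ reduces to a multiple root of $F$ modulo $p$, and in particular $p\mid\disc(F)$, so $\mathrm{rad}(m)\mid\disc(F)$.

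Call $(a_1,\ldots,a_{n-1})$ \emph{good} if $\disc(F)\neq0$; the non-good tuples form a proper Zariski-closed subset of $\mathbb A^{n-1}$ (the branch locus of the degree-$(n-1)$ projection $\{\Delta=0\}\to\mathbb A^{n-1}$), so there are $O(H^{n(n-1)/2-1})$ of them, and bounding the number of admissible $a_n$ trivially by $O(H^n)$ contributes $O(H^{n(n+1)/2-1})$ overall; the codimension-one locus $\{\Delta'(f)=0,\ \Delta(f)\neq0\}$ is absorbed the same way. Fix a good tuple. For a prime power $p^k\parallel m$, since $a_n$ reduces to a root of $F$ of some multiplicity $\mu_p\in\{2,\ldots,n-1\}$, expanding $F$ $p$-adically about that root shows that $p^{2k}\mid F(a_n)$ forces $a_n$ into a single residue class modulo $p^{\lceil 2k/\mu_p\rceil}$ (up to a bounded correction from the $p$-part of the leading constant), and $\mu_p\le n-1$ makes this modulus $\gg_n p^{2k/(n-1)}$. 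Since $F(a_n)\neq0$ and $|F(a_n)|\ll H^{n(n-1)}$, we have $m\ll H^{n(n-1)/2}$, so $(p^k)^{2/(n-1)}\ll H^n$; hence for a prime power $m=p^k>M$ the number of admissible $a_n$ with $|a_n|<H^n$ is $O_\epsilon\!\bigl(H^{n+\epsilon}/(p^k)^{2/(n-1)}\bigr)$. Summing over prime powers $p^k>M$ with $p\mid\disc(F)$—of which only $O(H^\epsilon)$ primes $p$ are relevant, each contributing a geometric series in $k$—gives $O_\epsilon\!\bigl(H^{n+\epsilon}/M^{2/(n-1)}\bigr)$ per good fiber.

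For a general modulus $m>M$ one uses that $\W_m^{(1)}\subseteq\W_d^{(1)}$ for every divisor $d\mid m$, which lets one replace $m$ by a divisor minimal among those exceeding $M$: such a divisor is either a prime $>M$ or bounded by $M^2$, and in the latter case a Chinese-Remainder-Theorem argument (admissible $a_n$ lying in $O_\epsilon(m^\epsilon)$ classes modulo an integer $\gg_n m^{2/(n-1)}$, with incomplete classes controlled via $\mathrm{rad}(m)\mid\disc(F)$) again yields a per-fiber bound $O_\epsilon\!\bigl(H^{n+\epsilon}/M^{2/(n-1)-\epsilon}+H^\epsilon\bigr)$. Multiplying the per-good-fiber bound by the number $H^{1+2+\cdots+(n-1)}=H^{n(n-1)/2}$ of fibers gives $O_\epsilon\!\bigl(H^{n(n+1)/2+\epsilon}/M^{2/(n-1)-\epsilon}\bigr)+O_\epsilon\!\bigl(H^{n(n-1)/2+\epsilon}\bigr)$, and since $H^{n(n-1)/2}\ll H^{n(n+1)/2-1}$, adding the non-good contribution yields \eqref{eqeq1}.

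The main obstacle is precisely the uniform handling of all moduli $m>M$: for $m$ a prime power $p^k$ with $p$ fixed and $k$ large, a naive congruence count for $p^{2k}\mid\Delta(f)$ saves only a bounded power of $p$ (not of $p^k$), and it is the extra divisibility $p^k\mid\Delta'(f)$—forcing $a_n$ to cluster $p$-adically about a multiple root of $F$—together with the structural constraint $\mathrm{rad}(m)\mid\disc(F)$ (needed both to tame the incomplete-residue-class terms and to restrict the range of $m$) that makes the argument go through. This mechanism is also the origin of the exponent $2/(n-1)$, which is genuinely weaker than the $1/\sqrt M$ available when $m$ is restricted to be squarefree.
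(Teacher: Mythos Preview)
Your overall strategy---fiber over $(a_1,\ldots,a_{n-1})$, use an auxiliary polynomial in these variables to constrain the modulus, then bound admissible $a_n$ by a density estimate---is exactly the paper's, and your per--prime-power density bound is the right one (the paper makes it rigorous via a maximal factorization of $F$ in $(\Z/p^\delta\Z)[x]$ rather than by ``expanding $p$-adically about a root,'' which is delicate when the mod-$p$ root does not lift to $\Z_p$). The genuine gap is in your choice of auxiliary polynomial and the ensuing treatment of general $m>M$. From $p\mid\Delta(f)$ and $p\mid\Delta'(f)$ you correctly deduce $p\mid F'(a_n)$ via the identification of $\{\Delta=\Delta'=0\}$ with the singular locus of $\{\Delta=0\}$ over $\bar\F_p$; but this is a set-theoretic statement and does not lift modulo $p^k$ for $k>1$, so your polynomial $\disc(F)$ is only divisible by $\mathrm{rad}(m)$, not by $m$ itself. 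When you then pass to a minimal divisor $d>M$ and attempt to assemble a per-fiber bound, you are implicitly summing your estimate over all $d$ that are either primes in $S$ exceeding $M$ or lie in $(M,M^2]$ with $\mathrm{rad}(d)\mid\disc(F)$. The number of the latter can be as large as $(O(\log M))^{\omega(\disc F)}$, and since $\omega(\disc F)$ can be of order $\log H/\log\log H$, this is $H^{O(1)}$, not $O_\epsilon(H^\epsilon)$; the phrase ``incomplete classes controlled via $\mathrm{rad}(m)\mid\disc(F)$'' does not address the resulting blow-up in the boundary terms.

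The paper avoids this entirely by taking the auxiliary polynomial $P\in\Z[a_1,\ldots,a_{n-1}]$ in the ideal generated by $\Delta$ and $\Delta'$---one may take $P=\Res_{a_n}(\Delta,\Delta')$, equivalently $\Res_t(F,G)$ with $G(t)=\Delta'(a_1,\ldots,a_{n-1},t)$---so that $m\mid\Delta(f)$ and $m\mid\Delta'(f)$ give $m\mid P(a)$ directly. Then for each tuple with $P(a)\neq0$ the modulus $m$ itself ranges over the $O_\epsilon(H^\epsilon)$ divisors of $P(a)$ exceeding $M$, and summing the density bound $O(H^n/m^{2/(n-1)-\epsilon}+1)$ over those finitely many $m$ gives the per-fiber bound at once. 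Replacing your $\disc(F)$ by $\Res_{a_n}(\Delta,\Delta')$ closes the gap and makes the two arguments coincide.
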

\begin{proof}
We begin by noting that the proofs of \cite[Theorem 3.5 and Lemma 3.6]{geosieve} imply the bound
\begin{equation}\label{eqeq2temp}
\#\bigcup_{\substack{m>M
 }}|\mu(m)|\{f\in \W_{m}^{(1)}:H(f)<H\}=
O_\epsilon\Big(\frac{H^{n(n+1)/2+\epsilon}}{M^{1-\epsilon}}\Big)+O(H^{n(n+1)/2-1}).
\end{equation}
Briefly, the proof is as follows: first, there exists a polynomial $P\in\Z[V_n]$, belonging to the algebra generated by $\Delta$ and $\Delta'$, which does not involve the constant coefficient $a_n$. Hence, we may consider $P$ as a polynomial in $a_1,\ldots, a_{n-1}$. Second, a bound of size $O(H^{n(n+1)/2-n-1})$ is easily obtained on the number of possible values of $a=(a_1,\ldots,a_{n-1})$, of bounded height, for which $P(a)=0$. Third, we fiber over the $O(H^{n(n+1)/2-n})$ values of such $a$ for which $P(a)\neq 0$. For each such $a$, it is clear that $P(a)$ has at most $O(H^\epsilon)$ different divisors. Since $P(f)=P(a_1,\ldots,a_{n-1})\equiv 0\pmod{m}$ for $f(x)=x^n+\sum_{i=1}^n a_{i}X^{n-i}\in \W_m$, fixing $a$ constrains the value of $m>M$ to be one of these $O(H^\epsilon)$ divisors of $P(a)$. Fourth and finally, we consider $\Delta$ to be a polynomial $\Delta_a(a_n)$ in $a_n$. We then note that the condition $m\mid \Delta(f)=\Delta_a(a_n)$ implies that there are at most $O((H^n/m^{1-\epsilon})+1)=O((H^n/M^{1-\epsilon})+1)$ choices for $a_n$, concluding the proof of \eqref{eqeq2temp}.

It is precisely this last step which breaks down when $m$ is not required to be squarefree. We note that $\Delta_a(a_n)$ is a polynomial of degree $n-1$ in $a_n$ whose leading coefficient $(-1)^{n(n-1)/2} n^{n-1}$ does not depend on $a$. Suppose $p^k\parallel m$ for some prime $p$ and some fixed $m\mid P(a)$. Let $\ell\ll_n 1$ be a nonnegative integer such that $g(a_n) = \Delta_a(a_n)/p^\ell\in\Z[x]$ and at least one of its coefficients is not divisible by $p$. The condition $m^2\mid \Delta(f)$ now becomes $p^{2k-\ell}\mid g(a_n)$ for every prime divisor $p$ of $m$. Let $\delta = \lceil \frac{2k-\ell}{n-1}\rceil$ and let $g(x) = f_1(x)\cdots f_j(x)$ be a factorization in $(\Z/p^\delta\Z)[x]$ where $j$ is maximal. Since the reduction $\bar{g}(x)\in\F_p[x]$ of $g(x)$ modulo $p$ is a nonzero polynomial of degree at most $n-1$, we have $\bar{g} = \bar{f_1}\cdots\bar{f_j}$ and so $j\leq n-1.$ In order for $p^{2k-\ell}\mid g(a_n)$, we then must have $p^\delta\mid f_i(a_n)$ for some $i = 1,\ldots,j$. This implies that $(x-a_n)\mid f_i(x)$ in $(\Z/p^\delta\Z)[x]$ and so by maximality of $j$, we see that $f_i(x)$ is linear. Hence the density of integers $a_n$ such that $p^{2k-\ell}\mid g(a_n)$ is at most $(n-1)/p^\delta.$ Multiplying over all prime divisors $p$ of $m$ gives that the density of integers $a_n$ such that $m^2\mid \Delta_a(a_n)$ is $O(1/m^{2/(n-1)-\epsilon})$. Combining with the proof of \cite[Theorem 3.5 and Lemma~3.6]{geosieve} recalled above gives \eqref{eqeq1}.
%it is at worst an $(n-1)$'th power modulo $p^k$. Therefore, we instead obtain the bound of $O(H^n/(M^{1/(n-1)})+1)$ on the number of possibilities for $a_n$. This yields the required result.
\end{proof}

We are ready to prove Theorem \ref{thMainPolybound}.

\medskip
\noindent\textbf{Proof of Theorem \ref{thMainPolybound}:} %
To bound the number of elements in $\cup_{m>M} \W_{m}^{(2)}$, we
recall the setup of \cite{SF1}. 
The proofs of \cite[Theorems~2.3 and 3.2]{SF1} imply that we have a map $\sigma_m:\W_{m}^{(2)}\to \frac14 W(\Z)$, injecting into the set of distinguished elements, such that the resolvent of $\sigma_m(f)$ is $f$ and $Q(\sigma_m(f))=m$. Here, $Q$ is an invariant defined on the set of distinguished elements of $W(\Z)$, given explicitly in \cite[\S2.1, \S3.1]{SF1}.
To bound the number of elements in $\cup_{m>M} \W_{m}^{(2)}$ having height less than $H$, it thus suffices to bound the number of $G(\Z)$-orbits on distinguished elements of $W(\Z)$ having height less than~$H$ and $Q$-invariant larger than $M$.
This is precisely what is carried out via geometry-of-numbers arguments in~\cite[\S\S2--3]{SF1}. Moreover, using Corollary~\ref{corhilbert} instead of the Selberg sieve in the proofs of~\cite[Propositions 2.6 and 3.5]{SF1} improves the error terms there to $O_\epsilon(H^{\dim(W)-1+1/2^{2g}+\epsilon})$. 
We thus obtain the following bound: 
\begin{equation}
\label{eqeq2}
\#\bigcup_{\substack{m>M
}}\{f\in \W_{m}^{(2)}:H(f)<H\}=
O_\epsilon(H^{n(n+1)/2+\epsilon}/M)+O_\epsilon(H^{n(n+1)/2-1+1/2^{2g}+\epsilon}).
\end{equation}
We note that \eqref{eqeq2} is a strengthening of \cite[Theorem 1.5(b)]{SF1}.
%Moreover, we note that the $X^{\epsilon}$ factor in the second summand of the right hand side of (b) can be improved to $\log X$.
%From Proposition \ref{thm:sw} and the Chinese remainder theorem, it follows that $\W_m$ is contained in the union of $\W_{q}^{(1)}$ and $\W_{q}^{(2)}$, where $q\ll m^{1/3}$. 
%Proposition \ref{cor:sw} and Equations \eqref{eqeq1} and \eqref{eqeq2} immediately imply Theorem \ref{thMainPolybound}.
Optimizing by taking $\alpha = (n-1)/(n+3)$ and $\beta = 2/(n+3)$ in Proposition \ref{cor:sw} gives Theorem \ref{thMainPolybound}. $\Box$

\medskip

We now deduce Theorem \ref{thm:mainterm} from Theorem \ref{thMainPolybound}.

\medskip
\noindent\textbf{Proof of Theorem \ref{thm:mainterm}:} %
Applying an inclusion-exclusion sieve, we obtain
\begin{equation}\label{eq:inexmmm}
\#\bigl\{f\in V_n(\Z): H(f)<H \mbox{ and } \Delta(f) 
\mbox{ squarefree}\bigr\}=
\sum_{m\geq 1}\mu(m)\#\{f\in \W_m:H(f)<H\}.
\end{equation}
We break up the sum over $m$ into three ranges, namely, the {\it large range} consisting of $m\geq H^{n/2}$, the {\it middle range} consisting of $H\leq m< H^{n/2}$, and the {\it small range} consisting of $m<H$. We will obtain precise estimates for the sum of $m$ over the small range, and prove that the sum over~$m$ in the middle and large ranges are negligible, where we say that a number is \emph{negligible} if it is $O_\epsilon(H^{n(n+1)/2-1+1/2^{2g}+\epsilon})$.

First, note that
Theorem \ref{thMainPolybound} implies the bound
\begin{equation}\label{eq:largemmm}
\sum_{m\geq H^{n/2}}|\mu(m)|\#\{f\in \W_m:H(f)<H\}= O_\epsilon(H^{n(n+1)/2-1+1/2^{2g}+\epsilon}).
\end{equation}
Therefore, the sum over the large range is negligible.

Next we consider the middle range. That is, we sum the right hand side of \eqref{eq:inexmmm} over $m$ in the range $H<m\leq H^{n/2}$ and prove that the sum is $O_\epsilon(H^{n(n+1)/2-1+\epsilon})$. We fiber over integer tuples $a=(a_1,\ldots,a_{n-1})$. For any integer tuple $a$, let $\Delta_a(a_n)$ denote as above the discriminant of $x^n + a_1x^{n-1} + \cdots + a_n$ and let $\theta_a(m)$ denote the density of integers $a_n$ such that $m^2\mid \Delta_a(a_n).$ Let $B$ denote the set of integer tuples $a=(a_1,\ldots,a_{n-1})$ such that $|a_i|<H^i$ for $i=1,\ldots,n-1$. Then we have
\begin{equation}\label{eq:midm}
\sum_{H<m\leq H^{n/2}}|\mu(m)|\#\{f\in \W_m:H(f)<H\}= \sum_{a\in B}\sum_{H<m\leq H^{n/2}}|\mu(m)|(\theta_a(m)\cdot 2H^n + O(1)).
\end{equation}
Since $\#B = O(H^{n(n+1)/2 - n})$, we see that the sum of the $O(1)$ term is negligible.

Now the discriminant $\Delta(\Delta_a)$ of $\Delta_a(a_n)$ is a polynomial in $a_1,\ldots,a_{n-1}$. We claim that $\Delta(\Delta_a)$ has a term involving only $a_{n-1}$. Indeed, when $a_1=\cdots=a_{n-2}=0$, we have $$\Delta_a(a_n) = \Delta(x^n + a_{n-1}x + a_n) =  (-1)^{n(n-1)/2}n^na_n^{n-1}-(-1)^{n(n+1)/2}(n-1)^{n-1}a_{n-1}^n,$$
and so $$\Delta(\Delta_a) = C_na_{n-1}^{n(n-2)},$$
for some nonzero constant $C_n$ depending only on $n$. As a consequence, given any values for $a_1,\ldots,a_{n-2}$, $\Delta(\Delta_a)$ will be a nonzero polynomial in $a_{n-1}$. Hence, we have
$$\#\{a\in B\mid \Delta(\Delta_a) = 0\} = O(H^{n(n+1)/2 - n - (n-1)}).$$
Hence the contribution to the right hand side of \eqref{eq:midm} over $a\in B$ with $\Delta(\Delta_a) = 0$ is negligible.

Suppose now $a\in B$ with $\Delta(\Delta_a)\neq 0$. Take any squarefree $m$ with $H<m\leq H^{n/2}$. Let $d = \gcd(\Delta(\Delta_a),m)$ and let $m_1 = m/d$. For any prime $p\mid d$ and $p\nmid n$, the polynomial $\Delta_a(a_n)$ mod $p$ is a nonzero polynomial (since its leading coefficient is nonzero) with a repeated factor, in which case $\theta_a(p) = O(1/p).$ For any prime $p\mid m_1$ and $p\nmid n$, the polynomial $\Delta_a(a_n)$ mod $p$ is a nonzero polynomial without a repeated factor, in which case $\theta_a(p) = O(1/p^2).$ Hence, we have $\theta_a(m) = O(1/(dm_1^{2-\epsilon})),$ where we absorb any common divisors of $d$ and $n$, or of $m_1$ and $n$ into the implied constant. Denoting by $\sideset{}{'}\sum$ a sum over squarefree numbers, we have
\begin{eqnarray*}
\sum_{\substack{a\in B\\ \Delta(\Delta_a)\neq 0}} \sideset{}{'}\sum_{H<m\leq H^{n/2}} \theta_a(m) &\ll_\epsilon& \sum_{\substack{a\in B\\ \Delta(\Delta_a)\neq 0}}\Big( \sideset{}{'}\sum_{\substack{1\leq d\leq H\\ d\mid\Delta(\Delta_a)}} \sideset{}{'}\sum_{\frac{H}{d}<m_1\leq \frac{H^{n/2}}{d}}\frac{1}{dm_1^{2-\epsilon}}+\sideset{}{'}\sum_{\substack{H< d\leq H^{n/2}\\ d\mid\Delta(\Delta_a)}} \sideset{}{'}\sum_{1<m_1\leq \frac{H^{n/2}}{d}}\frac{1}{dm_1^{2-\epsilon}}\Big)\\
&\ll_\epsilon& \sum_{\substack{a\in B\\ \Delta(\Delta_a)\neq 0}} \sideset{}{'}\sum_{\substack{1\leq d\leq H^{n/2}\\ d\mid\Delta(\Delta_a)}} \frac{1}{H^{1-\epsilon}}\\
&\ll_\epsilon& H^{n(n+1)/2-n -1+\epsilon},
\end{eqnarray*}
where the last bound follows because the number of divisors of the nonzero integer $\Delta(\Delta_a)$ with each $a_i$ bounded by some fixed power of $H$ is $O_\epsilon(H^\epsilon)$. Therefore, we have proved that 
\begin{equation}\label{eq:midmmm}
\sum_{H<m\leq H^{n/2}}|\mu(m)|\#\{f\in \W_m:H(f)<H\}=O_\epsilon(H^{n(n+1)/2-1+\epsilon}).
\end{equation}

It remains to consider the small range $1\leq m \leq H$. For this range, note that $m^2\leq H^2$ is less than the range of $a_2$ and so we fiber over $a_1$ only. Denote the density of $\W_m$ in $V_n(\Z)$ by $\theta(m)$. When $m=p$ is a prime, we have $\theta(p) = O(1/p^2)$. When $m$ is squarefree in general, we have $\theta(m) = O_\epsilon(1/m^{2-\epsilon}).$ For any integer $a_1$, let $V_n(a_1,\Z)$ denote the set of monic polynomials of degree $n$ whose $x^{n-1}$-coefficient is $a_1$ and let $\theta(a_1,m)$ denote the density of $\W_m\cap V_n(a_1,\Z)$ in $V_n(a_1,\Z)$. When $m$ is coprime to $n$, we simply have $\theta(a_1,m) = \theta(m).$ 

We fiber over $a_1$ and break the regions for $a_2,\ldots,a_n$ into intervals of length $m^2$ to obtain
\begin{eqnarray*}
&&\sum_{1\leq m\leq H}\mu(m)\#\{f\in \W_m:H(f)<H\}\\
&=&\displaystyle\sum_{1\leq m\leq H}\mu(m)\sum_{|a_1|<H}(\theta(a_1,m)2^{n-1}H^{n(n+1)/2-1}+O(H^{n(n+1)/2-3}))
\\&=&2^{n-1}H^{n(n+1)/2-1}\sum_{d\mid n}\sum_{\substack{1\leq m_1\leq H/d\\ \gcd(m_1,n) = 1}} \mu(d)\mu(m_1) \sum_{|a_1| < H} \theta(a_1,d)\theta(a_1,m_1) + O(H^{n(n+1)/2-1})\\
&=& 2^{n-1}H^{n(n+1)/2-1}\sum_{\substack{1\leq m_1\leq H/d\\ \gcd(m_1,n) = 1}}\mu(m_1)\theta(m_1)\sum_{d\mid n}\mu(d)\sum_{|a_1| < H}\theta(a_1,d)+ O(H^{n(n+1)/2-1}).
\end{eqnarray*}
For any squarefree $d\mid n$, we also have
$$\sum_{|a_1|<H}(\theta(a_1,d)2^{n-1}H^{n(n+1)/2-1} + O(H^{n(n+1)/2-3})) = \theta(d)2^nH^{n(n+1)/2} + O(H^{n(n+1)/2-1}).$$
as both sides count monic polynomials of degree $n$ with height bounded by $H$ having discriminant divisible by $d^2$. Hence
$$\sum_{|a_1| < H}\theta(a_1,d) = \theta(d)\cdot 2H + O(1)$$ and thus
$$\sum_{d\mid n}\mu(d)\sum_{|a_1| < H}\theta(a_1,d) = 2H\sum_{d\mid n}\mu(d)\theta(d) + O(1),$$
since the sum of $O(1)$ over $d\mid n$ is independent of $H$. Finally, combined with
$$\sum_{\substack{1\leq m_1<H/d\\ \gcd(m_1,n) = 1}}\mu(m_1)\theta(m_1) \ll_\epsilon \sum_{m_1\geq1}\frac{1}{m_1^{2-\epsilon}} = O(1)$$ and $$\sum_{1\leq m\leq H}\mu(m)\theta(m) = \sum_{m\geq1}\mu(m)\theta(m) - \sum_{m>H}\mu(m)\theta(m) = \lambda_n - O_\epsilon(\sum_{m>H}\frac{1}{m^{2-\epsilon}}) = \lambda_n - O_\epsilon(\frac{1}{H^{1-\epsilon}}),$$ we have 
\begin{equation}\label{eq:smallmmm}
    \sum_{1\leq m\leq H}\mu(m)\#\{f\in \W_m:H(f)<H\} = \lambda_n 2^n H^{n(n+1)/2} + O_\epsilon(H^{n(n+1)/2-1+\epsilon}).
\end{equation}
The first estimate of Theorem \ref{thm:mainterm} now follows from \eqref{eq:inexmmm}, \eqref{eq:largemmm}, \eqref{eq:midmmm} and \eqref{eq:smallmmm}. The~second estimate follows similarly. $\Box$

\medskip

Next, we deduce the analogue of Theorem \ref{thMainPolybound} for polynomials with vanishing subleading coefficient.
\begin{theorem}\label{theq4}
Let $\W_m^{\circ}$ denote the set of elements in $\W_m$ that have vanishing subleading coefficient. Then
\begin{equation}\label{eqfinal}
\#\bigcup_{\substack{m>M
}}\{f\in \W^{\circ}_{m}:H(f)<H\}\ll_{n,\epsilon}
\frac{H^{(n-1)(n+2)/2+\epsilon}}{{M^{2/(n+3)-\epsilon}}}+H^{(n-1)(n+2)/2-1+1/2^{2g}+\epsilon}.
\end{equation}
\end{theorem}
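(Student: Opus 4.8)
The plan is to deduce Theorem~\ref{theq4} directly from Theorem~\ref{thMainPolybound}, exploiting the translation-invariance of the discriminant to trade the condition ``vanishing subleading coefficient'' for a saving of one power of $H$. We may assume throughout that $H$ exceeds any fixed constant depending on $n$, since for bounded $H$ there are only $O_n(1)$ monic integer polynomials of degree $n$ and bounded height, and the claimed bound is then trivially satisfied.

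First I would record the elementary root bound: if $f(x)=x^n+a_1x^{n-1}+\cdots+a_n=\prod_{i=1}^n(x-r_i)$ is monic of degree $n$, then $|r_i|\le 2H(f)$ for every $i$. Indeed, if some root had $|r|>2H(f)$, then from $r^n=-\sum_{i=1}^n a_ir^{n-i}$ and $|a_i|\le H(f)^i$ we would get $|r|^n\le\sum_{i=1}^n H(f)^i|r|^{n-i}<|r|^n\sum_{i\ge 1}2^{-i}<|r|^n$, a contradiction. Consequently, for any $f$ with $H(f)<H$ and any $t\in\Z$ with $|t|<H$, the translate $f(x+t)=\prod_i(x-(r_i-t))$ has all ``roots'' bounded by $|r_i-t|<3H$, so each of its coefficients, being $\pm e_i(r_1-t,\dots,r_n-t)$, has absolute value at most $\binom{n}{i}(3H)^i$; hence $H(f(x+t))\le C_nH$ with $C_n=3\cdot 2^n$. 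Moreover $\Delta$ is invariant under the substitution $x\mapsto x+t$, so if $f\in\W_m^{\circ}$ then $f(x+t)\in\W_m$ for every $t\in\Z$.

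Next I would set up the key injection. Consider the map
$$\Phi\colon\ \Bigl(\bigcup_{m>M}\{f\in\W_m^{\circ}:H(f)<H\}\Bigr)\times\bigl\{t\in\Z:|t|\le H/(2n)\bigr\}\ \longrightarrow\ \bigcup_{m>M}\{g\in\W_m:H(g)<C_nH\},\qquad \Phi(f,t)=f(x+t).$$
By the previous paragraph $\Phi$ is well defined: if $f\in\W_{m_0}^{\circ}$ with $m_0>M$, then $f(x+t)\in\W_{m_0}$ and $H(f(x+t))<C_nH$. It is injective because the subleading coefficient of $f(x+t)$ equals $nt$ (the one of $f$ vanishing), so $t$, and then $f=f(x+t)|_{x\mapsto x-t}$, are recovered from $\Phi(f,t)$. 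Therefore
$$\#\bigcup_{m>M}\{f\in\W_m^{\circ}:H(f)<H\}\cdot\bigl(2\lfloor H/(2n)\rfloor+1\bigr)\ \le\ \#\bigcup_{m>M}\{g\in\W_m:H(g)<C_nH\}.$$
Since $2\lfloor H/(2n)\rfloor+1\gg_n H$ (as $H$ is large), applying the first estimate of Theorem~\ref{thMainPolybound} to the right-hand side and absorbing the constant $C_n$ into the implied constant gives
$$\#\bigcup_{m>M}\{f\in\W_m^{\circ}:H(f)<H\}\ \ll_{n,\epsilon}\ \frac{1}{H}\Bigl(\frac{H^{n(n+1)/2+\epsilon}}{M^{2/(n+3)-\epsilon}}+H^{n(n+1)/2-1+1/2^{2g}+\epsilon}\Bigr),$$
and \eqref{eqfinal} follows upon using $n(n+1)/2-1=(n-1)(n+2)/2$.

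There is no substantial obstacle in this argument; the only point requiring (minor) care is the height comparison $H(f(x+t))\le C_nH$, which is immediate from the root bound above. An alternative route, if one prefers not to invoke Theorem~\ref{thMainPolybound} as a black box, is to rerun its proof with the hyperplane section $a_1=0$ imposed throughout: in Proposition~\ref{propW1} one fibers over $(a_2,\dots,a_{n-1})$ rather than $(a_1,\dots,a_{n-1})$, while in the bound \eqref{eqeq2} the map $\sigma_m$ lands in the codimension-one subspace of $W(\Z)$ on which the subleading coefficient of the resolvent vanishes; in each case the relevant power of $H$ drops by exactly one, and Proposition~\ref{cor:sw}, with the same optimal choice $\alpha=(n-1)/(n+3)$, $\beta=2/(n+3)$, recombines the two pieces. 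I expect the translation argument to be the shorter and cleaner of the two.
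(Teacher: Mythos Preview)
Your argument is correct and is essentially the same as the paper's own proof: the paper also observes that $f(x)\mapsto f(x+k)$ preserves membership in $\W_m$ while changing the height by at most $O(|k|)$, so each $f\in\W_m^\circ$ with $H(f)<H$ corresponds to $\gg H$ elements of $\W_m$ with height $\ll H$, and then applies Theorem~\ref{thMainPolybound}. Your version is simply more explicit about the height comparison and the injectivity via the subleading coefficient $nt$.
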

\begin{proof}
For an integer $k$, the transformation $f(x)\mapsto f(x+k)$ does not change membership in $\W_m$ and changes the height of $f$ by at most  
$O(|k|)$. Hence the set of elements in $\W_m$ having vanishing subleading coefficient and height $<H$ are each equivalent (under some transformation $f(x)\mapsto f(x+k)$) to $\gg H$ elements in $\W_m$ having height $\ll H$. Therefore,
\begin{equation*}
\#\bigcup_{\substack{m>M
}}\{f\in \W^{\circ}_{m}:H(f)<H\}\ll_n
\frac{1}{H}\#\bigcup_{\substack{m>M
}}\{f\in \W_{m}:H(f)<H\}.
\end{equation*}
The result now follows from Theorem \ref{thMainPolybound}.
\end{proof}

We are now ready to prove Theorem \ref{thimp}.

\medskip
\noindent{\bf Proof of Theorem \ref{thimp}:}
Let $N_n^\prim(X)$ denote the number of primitive number fields of degree~$n$ having absolute discriminant less than $X$. As explained in the introduction, the set of primitive number fields with absolute  discriminant less than $X$ injects into the set of integer monic  polynomials of degree $n$ with vanishing subleading coefficient and height $\ll X^{1/(2n-2)}$. Let $S_X$ denote the image of this injection. Choose $\kappa=n-1$. By Proposition \ref{thm:discsmall}, it follows that away from a set $S_X'$ of size $O(X^{\frac{n+2}{4}-\frac{1}{2n-2}})$, every element in $S$ has absolute  discriminant $\gg X^{\frac{n-1}{2}}$. Since the absolute discriminant of the field corresponding to an element in $S_X$ is less than $X$ by definition, the absolute discriminant of any element in $S_X\backslash S_X'$ is divisible by $m^2$ for some $m\gg X^{\frac{n-3}{4}}$.
By Theorem~\ref{theq4}, we thus deduce that
\begin{equation*}
\begin{array}{rcl}
N_n^\prim(X)&\leq& \#S_X\;\;=\;\;\#S_X'+\#(S_X\backslash S_X')
\\[.2in]&\ll_\epsilon&\displaystyle 
X^{\frac{n+2}{4}-\frac{1}{2n-2}}+
\frac{X^{\frac{n+2}{4}+\epsilon}}{X^{\frac{n-3}{2(n+3)}}}+X^{\frac{n+2}{4}-\frac{1}{2n-2}+\frac{1}{2^{2g}(2n-2)}+\epsilon}.
\end{array}
\end{equation*}
Since $\frac{n-3}{2(n+3)}\geq\frac{1}{2n-2}$ for $n\geq 6$, we have proved the version of Theorem \ref{thimp} where $N_n(X)$ is replaced by $N_n^\prim(X)$.

Finally, we note that the bound \cite[Equation (1.2)]{Sch} with $L=\Q$ implies that the number of imprimitive number fields of degree~$n$ with absolute discriminant less than $X$ is at most  \smash{$O(X^{\frac{n}{8}+\frac{1}{2}})$}. This completes the proof of Theorem \ref{thimp}. $\Box$

\subsection*{Acknowledgments}

We are very grateful to Ashvin Swaminathan and Sameera Vemulapalli for helpful conversations and comments on an earlier version of this manuscript. We thank the referee for many helpful comments, and in particular for pointing out an issue in the previous proof of what is now Proposition \ref{propW1}. 
We also thank Theresa  Anderson, Ayla Gafni, Kevin Hughes, Robert  Lemke Oliver,
David Lowry-Duda, Frank Thorne, Jiuya Wang, and Ruixiang Zhang for sharing with us their recent preprint~\cite{AGHLLTWZ}.

The first-named author was supported by a Simons Investigator Grant and NSF Grant~DMS-1001828. The second-named author was supported by an NSERC Discovery Grant and Sloan Research Fellowship. The third-named author was supported by an NSERC Discovery Grant.

\end{document}